\newtheorem{thm}{Theorem}[section]
\newtheorem{lem}[thm]{Lemma}
\newtheorem{cor}[thm]{Corollary}
\newtheorem{Thm}{Theorem}[section]
\newcommand{\s}{\sigma}
\newcommand{\Z}{\mathbb Z}
\newcommand{\Q}{\mathbb Q}
\newcommand{\R}{\mathbb R}
\newcommand{\C}{\mathbb C}
\begin{document}

\title{Some evaluations of Jones polynomials for certain families of weaving knots}
\date{\today}
\author{Sahil Joshi}
\address{Indian Institute of Technology Ropar, Rupnagar, Punjab 140001, India}
\email{2018maz0001@iitrpr.ac.in}

\author{Komal Negi}
\email{komal.20maz0004@iitrpr.ac.in}

\author{Madeti Prabhakar}
\email{prabhakar@iitrpr.ac.in}

\begin{abstract}
	In this paper, we derive formulae for the determinant of weaving knots $W(3,n)$ and $W(p,2)$.
	We calculate the dimension of the first homology group with coefficients in $\Z_3$ of the double cyclic cover of the 3-sphere $S^3$ branched over $W(3,n)$ and $W(p,2)$ respectively.
	As a consequence, we obtain a lower bound of the unknotting number of $W(3,n)$ for certain values of $n$.
\end{abstract}

\keywords{Jones polynomial, weaving knots, knot determinant, unknotting number}
\subjclass{57K10, 57K14}
\maketitle
\tableofcontents

\section{Introduction}

Let $B_{N+1}$ denote the Artin braid group generated by $N$ generators, namely, $\s_1,\s_2,\ldots,\s_N$ satisfying the braid relations:
\begin{equation*}
	\begin{array}{llll}
		\mathrm{(B1)} & \s_i \s_j = \s_j \s_i, && (i,j\in\{1,2,\ldots,N\}, |i-j|\geq 2),\\
		\mathrm{(B2)} & \s_i \s_{i+1} \s_i = \s_{i+1} \s_i \s_{i+1}, && (i\in\{1,2,\ldots,N-1\}).
	\end{array}
\end{equation*}

There is a well-known geometric interpretation of the group $B_{N+1}$ in terms of braid diagrams on $(N+1)$-strands.
Throughout the work, the braid diagrams which correspond to the generator $\s_i$ and its inverse $\s_i^{-1}$ will be
\begin{align*}
	&\raisebox{-0.15cm}{$\mathrm{(i)~} \s_i \leftrightarrow$~}
	\begin{tangles}{ccccccc}
		{\scriptstyle 1} & & {\scriptstyle i-1} & {\scriptstyle ~~i\quad i+1~} & {\scriptstyle i+2} & & {\scriptstyle N+1}\\
		&\vspace{2pt}\\
		\id&\raisebox{0.3cm}{$~\ldots$}&\id&\xx&\id&\raisebox{0.3cm}{$\ldots$}&\id
	\end{tangles}
	\raisebox{-0.15cm}{;}&
	&\raisebox{-0.15cm}{$\mathrm{(ii)~} \s_i^{-1} \leftrightarrow$~}
	\begin{tangles}{ccccccc}
		{\scriptstyle 1} & & {\scriptstyle i-1} & {\scriptstyle ~~i\quad i+1~} & {\scriptstyle i+2} & & {\scriptstyle N+1}\\
		&\vspace{2pt}\\
		\id&\raisebox{0.3cm}{$~\ldots$}&\id&\x&\id&\raisebox{0.3cm}{$\ldots$}&\id
	\end{tangles}
	\raisebox{-0.15cm}{.}
\end{align*}

Put $B_\infty = \bigcup_{N\geq 1}B_{N+1}$.
The closure of a braid (or its corresponding braid diagram) $\alpha\in B_\infty$, denoted by $\mathbf{cl}(\alpha)$, is obtained by joining the upper ends of its strands to the respective lower ends via non-intersecting curves in the plane.
This yields a knot/link diagram.
One of the remarkable theorems in knot theory is attributed to J.\,W.\,Alexander~(see Murasugi~\cite{Mur}), which states that every knot or link in the $3$-sphere $S^3$ can be presented by the closure of a braid diagram.
This connotes that braids are closely related to knots and links in $S^3$.
In fact, many families of knots are defined by using braids, for instance, weaving knots which are objects of our interest.

Let $p$ and $n$ be positive integers such that $p\geq 2$.
The \emph{weaving knot of type $(p,n)$}, denoted by $W(p,n)$, is an alternating knot or link which is the closure of the braid
$B_W(p,n)=\big(\s_1\s_2^{-1}\s_3\s_4^{-1}\cdots\s_{p-1}^{(-1)^p}\big)^n$ on $p$ strands.
For example, Figure~\ref{Wbraids} shows braids $B_W(p,n)$ for $(p,n) = (3,2), (4,2), (5,2)$, and $(4,3)$ whose closures represent weaving knots $W(3,2)=4_1, W(4,2)=6^2_3, W(5,2)=8_{12}$, and $W(4,3)=9_{40}$, respectively, as per the Alexander-Briggs-Rolfsen knot table (see~Rolfsen~\cite[Appendix~C]{Rol}).

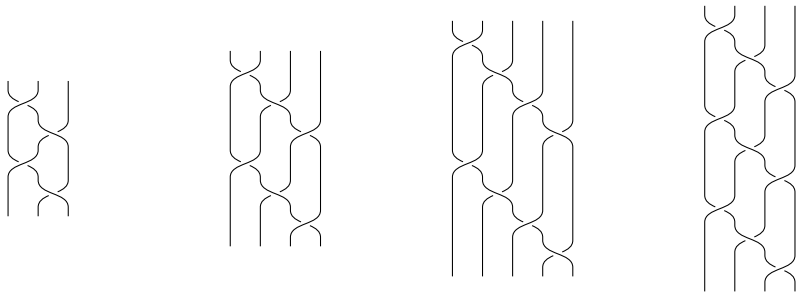
\begin{figure}[h!]
	% 4_1
	\begin{subfigure}{0.24\textwidth}
		\centering
		\begin{tikzpicture}[scale=0.4]
		\braid s_1^{-1} s_2 s_1^{-1} s_2;
		\end{tikzpicture}
	\end{subfigure}
	% 6^2_3
	\begin{subfigure}{0.24\textwidth}
		\centering
		\begin{tikzpicture}[scale=0.4]
		\braid s_1^{-1} s_2 s_3^{-1} s_1^{-1} s_2 s_3^{-1};
		\end{tikzpicture}
	\end{subfigure}
	% 8_12
	\begin{subfigure}{0.24\textwidth}
		\centering
		\begin{tikzpicture}[scale=0.4]
		\braid s_1^{-1} s_2 s_3^{-1} s_4 s_1^{-1} s_2 s_3^{-1} s_4;
		\end{tikzpicture}
	\end{subfigure}
	% 9_40
	\begin{subfigure}{0.24\textwidth}
		\centering
		\begin{tikzpicture}[scale=0.4]
		\braid s_1^{-1} s_2 s_3^{-1} s_1^{-1} s_2 s_3^{-1} s_1^{-1} s_2 s_3^{-1};
		\end{tikzpicture}
	\end{subfigure}
	\caption{Braids $B_W(3,2), B_W(4,2), B_W(5,2)$, and $B_W(4,3)$, respectively.}
	\label{Wbraids}
\end{figure}

Weaving knots possess several distinguishing features.
They share the same projection with torus knots and links, another important class of knots and links.
They are hyperbolic, alternating, and conjectured to have the largest hyperbolic volume per crossing number.
Moreover, every knot or link is obtained from a weaving knot's standard closed braid presentation by switching some of its crossings.
An extensive study of the family of weaving knots will provide an insight to `volume conjecture'~(Singh, Mishra and Ramadevi~\cite[page~2]{SMR}).
Therefore weaving knots are among the prominent objects in knots and links.
For substantial literature on weaving knots, one may refer to the research articles \cite{SMR}, Mishra and Staffeldt~\cite{MS}, and Champanerkar, Kofman and Purcell~\cite{CKP}.

By Alexander's theorem, the map $\mathbf{cl}:B_\infty\rightarrow\{\text{knots/links}\}$ is surjective.
It is known that the correspondence $\alpha \mapsto \mathbf{cl}(\alpha)$ is many-to-one, i.e., the respective closures of two different braid diagrams may represent the same knot type.
In this regard, a characterization is given by a theorem of A.\,A.\,Markov~(see~\cite{Mur}), which asserts that any two braids are Markov equivalent if and only if their closures represent isotopic knots/links. 

The notion of Markov equivalence is as follows:
The two types of Markov moves on braids from the set $B_\infty$ will be denoted by the following:
\begin{equation*}
	\begin{array}{lll}
		\mathrm{(M1)} & \beta \rightarrow \gamma\beta\gamma^{-1},
		\text{or, } \gamma\beta\gamma^{-1} \rightarrow \beta, & (\beta,\gamma\in B_{N+1}),\\
		\mathrm{(M2)} & \beta \rightarrow \beta\s_{N+1}^{\pm1}, \text{or, } \beta\s_{N+1}^{\pm1} \rightarrow \beta, & (\beta\in B_{N+1}\hookrightarrow B_{N+2},\s_{N+1}\in B_{N+2}).\\
	\end{array}
\end{equation*}

Two braids $\alpha,\beta \in B_\infty$ are said to be Markov equivalent if there exists a finite sequence of Markov moves transforming $\alpha$ into $\beta$ considered upto braid moves \textrm{(B1)} and \textrm{(B2)}.

Alexander and Markov theorems, put together, translate the topological study of knots and links into an algebraic study of Markov equivalent braids.
In fact, several invariants of knots and links are defined using braids and braid groups, for instance, the Jones polynomial.

Suppose that $L$ is an oriented link in the $3$-sphere $S^3$ with $\mu(L)$ components.
The values of the Jones polynomial $V_L(t) \in \Z[t^{\pm\frac{1}{2}}]$ of the link $L$ at $t=e^{2\pi i/n}$ for $n = 1,2,3,4,6,10$ are special because of their relationship with other isotopy invariants of $L$.
For details, see Jones~\cite[$\S\,12$]{Jon}.

The main objective of this paper is to examine Jones polynomials for two infinite subfamilies, namely $\{W(p,2): p\geq 2\}$ and $\{W(3,n): n\geq 1\}$, of the doubly infinite family of weaving knots $\{W(p,n): p\geq 2, n\geq 1\}$.

Jones~\cite{Jon} showed that $V_L(-1) = \Delta_L(-1)$, where $\Delta_L(t) \in \Z[t^{\pm\frac{1}{2}}]$ is the Alexander polynomial of $L$.
The absolute value of $\Delta_L(-1)$, denoted by $\det(L)$, is called determinant of the link $L$.
Let $D_L$ be the double cyclic cover of $S^3$ branched over $L$ and let $H_1(D_L;\Z_3)$ be the first homology group of $D_L$ with coefficients in $\Z_3$.
Let $n_L$ denote the dimension of the vector space $H_1(D_L;\Z_3)$ over the field $\Z_3$.
The following theorem is due to Lickorish and Millett~\cite{LM}.

\begin{Thm}[{\cite[Theorem 3]{LM}}]\label{LickMill}
	$V_L(e^{i\pi/3}) = \pm\,i^{\mu(L)-1}\,(i\sqrt{3})^{n_L}$.
\end{Thm}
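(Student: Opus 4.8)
The plan is to argue by induction on the number of crossings of a diagram of $L$, using the defining skein relation of the Jones polynomial together with the behaviour of the double branched cover under crossing changes and smoothings. Writing $\omega=e^{i\pi/3}$, I would observe first that at $t=\omega$ the skein relation
\[
t^{-1}V_{L_+}(t)-tV_{L_-}(t)=(t^{1/2}-t^{-1/2})\,V_{L_0}(t)
\]
collapses to the especially simple form
\[
\omega^{-1}V_{L_+}(\omega)-\omega\,V_{L_-}(\omega)=i\,V_{L_0}(\omega),
\]
since $t^{1/2}-t^{-1/2}=2i\sin(\pi/6)=i$. For the base of the induction I would take the $\mu$-component unlink $U_\mu$, whose double branched cover is $\#^{\,\mu-1}(S^1\times S^2)$, so that $n_{U_\mu}=\mu-1$; a direct computation gives $V_{U_\mu}(\omega)=(-\sqrt3)^{\,\mu-1}=+\,i^{\mu-1}(i\sqrt3)^{\,\mu-1}$, which is exactly the claimed form.

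For the inductive step I would fix a crossing of the diagram and compare $L$ with the link obtained by switching that crossing and with its oriented smoothing. A crossing change relates two links with the same component number $\mu$, whereas the smoothing $L_0$ has $\mu\pm1$ components, and correspondingly the three double branched covers $D_{L_+},D_{L_-},D_{L_0}$ are related by a surgery (Mayer--Vietoris) triangle. The real content — and the step I expect to be the main obstacle — is to show that the set
\[
S=\{\,\pm\,i^{a}(i\sqrt3)^{b}:a,b\in\Z_{\ge0}\,\}
\]
is compatible with the reduced skein relation: a priori a $\Z[\omega]$-combination of two elements of $S$ need not lie in $S$, and it is precisely a homological constraint relating the exponents $b$ at $L_+,L_-,L_0$ — equivalently, controlling how the $3$-rank of $H_1(D_\bullet;\Z)$ can jump under a crossing change or a smoothing — that forces the third value back into $S$ and keeps the parity of the $i$-power synchronised with $\mu$.

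Finally I would identify the exponent $b$ with $n_L$ and pin down the magnitude. The cleanest route is to pass to the Kauffman bracket: with $A=t^{-1/4}$ the loop value $-A^2-A^{-2}$ becomes $-\sqrt3$ at $t=\omega$, so the state sum for $\langle D\rangle$ becomes, after the standard checkerboard grouping of states, a Gauss sum $\sum_{x}\omega^{Q(x)}$ over $\Z_3$ attached to the Goeritz quadratic form $Q$ of $L$. Standard evaluation of such Gauss sums gives $|V_L(\omega)|^2=3^{\dim\operatorname{rad}Q}$, and the radical of the Goeritz form modulo $3$ has dimension exactly $n_L=\dim_{\Z_3}H_1(D_L;\Z_3)$ (equivalently, the number of Fox $3$-colourings of $L$ equals $3^{\,1+n_L}=3\,|V_L(\omega)|^2$, as one checks on the trefoil, where $V(\omega)=-i\sqrt3$). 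Hence $|V_L(\omega)|=3^{\,n_L/2}$, and combining this with the form established above yields $V_L(\omega)=\pm\,i^{\mu(L)-1}(i\sqrt3)^{n_L}$; the residual sign $\pm$ is genuinely undetermined by these methods, consistent with the statement.
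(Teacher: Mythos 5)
First, a point of comparison: the paper does not prove this statement at all --- it is quoted verbatim from Lickorish--Millett \cite[Theorem~3]{LM} --- so the only meaningful benchmark is their original argument, and your plan is essentially that argument: induction using the skein relation at $t=e^{i\pi/3}$, with homological control of $n_L$ under skein moves. Your base case is right ($D_{U_\mu}=\#^{\mu-1}(S^1\times S^2)$, so $n_{U_\mu}=\mu-1$, and $V_{U_\mu}(\omega)=(-\sqrt3)^{\mu-1}=i^{\mu-1}(i\sqrt3)^{\mu-1}$), as is the reduced relation $\omega^{-1}V_{L_+}(\omega)-\omega V_{L_-}(\omega)=i\,V_{L_0}(\omega)$. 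But the step you yourself flag as ``the main obstacle'' is the entire content of the theorem, and you never supply it. What is needed is a precise lemma --- obtainable from Goeritz presentations of $H_1(D_\bullet;\Z_3)$, where the matrices for $L_+$, $L_-$, $L_0$ differ in a single diagonal entry or by deleting a row and column --- pinning down exactly which triples $(n_{L_+},n_{L_-},n_{L_0})$ can occur, followed by a case analysis in the Eisenstein ring $\Z[e^{i\pi/3}]$ showing that in each allowed configuration the skein relation forces the third value back into your set $S$ with the correct exponent; the delicate case is when the two known terms have equal $(i\sqrt3)$-adic valuation and cancellation raises it, and one must check this happens precisely when the homology rank jumps. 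Saying that ``a homological constraint forces the third value back into $S$'' names the difficulty without resolving it, so the induction does not close as written.

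Your Gauss-sum fallback does not repair this, for two reasons. First, the identity (number of Fox $3$-colourings) $=3\,|V_L(\omega)|^2$ is classically deduced \emph{from} the very theorem you are proving, so invoking it risks circularity; an independent Kauffman-bracket derivation exists (\`a la Przytycki), but the checkerboard regrouping of bracket states into a $\Z_3$ Gauss sum for the Goeritz form is not a routine ``standard evaluation'' and would itself need a careful proof of the sign and normalization bookkeeping. Second, and more decisively, the modulus alone cannot finish the job: an Eisenstein integer of absolute value $3^{n_L/2}$ has the form $u\,(i\sqrt3)^{n_L}$ for any of the six units $u=\pm e^{ik\pi/3}$, whereas the theorem asserts the much stronger constraint $u=\pm i^{\mu(L)-1}$ (after accounting for the $t^{1/2}$-factor when $\mu$ is even). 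Ruling out values such as $e^{\pm i\pi/3}\,3^{n_L/2}$, i.e.\ synchronising the argument of $V_L(\omega)$ modulo $\pi$ with $\mu(L)+n_L$, is exactly the phase information that only the inductive skein argument provides --- so your concluding sentence ``combining this with the form established above'' presupposes the step that was left unproven.
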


Further, the value of $n_L$ is also related to the unknotting number.
The \emph{unknotting number} of a knot $K$, denoted by $u(K)$, is the minimum number of crossing changes required to transform $K$ into the trivial knot $0_1$.
For example, $u(3_1)=1$, $u(8_{12})=2$, $u(8_{19})=3$, and $u(T(p,q))=\frac{(p-1)(q-1)}{2}$, where $T(p,q)$ denotes the torus knot of type $(p,q)$.
Let $K$ be a knot whose unknotting number is $u(K)$ and $\dim H_1(D_K;\Z_3) = n_K$.
The following theorem of Wendt is given in Miyazawa~\cite{Miy}.
\begin{Thm}[{\cite[Corollary 1.4]{Miy}}]\label{Miya}
	$u(K)\geq n_K$.
\end{Thm}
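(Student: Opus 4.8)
The plan is to induct on the unknotting number by means of the \emph{Montesinos trick}, which converts a single crossing change into a single Dehn surgery on the branched double cover, combined with the elementary fact that one surgery can alter the $\Z_3$-dimension of $H_1$ by at most one. Concretely, suppose $u(K)=u$ and fix an unknotting sequence $K=K_0\to K_1\to\cdots\to K_u=0_1$ in which each $K_{j+1}$ is obtained from $K_j$ by a single crossing change. I would establish the estimate $n_{K_j}\le n_{K_{j+1}}+1$ for every $j$. Since the double branched cover of the unknot is $D_{0_1}=S^3$, we have $n_{0_1}=\dim H_1(S^3;\Z_3)=0$, so telescoping yields $n_K=n_{K_0}\le n_{K_u}+u=u$, which is the assertion.

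For the inductive step I would first set up the local picture of the crossing change. Isolate the crossing at which $K_j$ and $K_{j+1}$ differ inside a ball $B\subset S^3$ meeting the knot in two trivial arcs; outside $B$ the two knots coincide. The double cover of $(B,\text{two trivial arcs})$ branched over the arcs is a solid torus $V$, whereas the double cover $X$ of the exterior $\big(S^3\setminus\mathrm{int}\,B,\,K\cap(S^3\setminus B)\big)$ is \emph{unchanged} by the crossing change. Hence $D_{K_j}=X\cup_{T^2}V$ and $D_{K_{j+1}}=X\cup_{T^2}V'$ differ only in the slope along which the solid torus is glued to $\partial X=T^2$; equivalently, $D_{K_{j+1}}$ is obtained from $D_{K_j}$ by a Dehn surgery along the core $\gamma$ of $V$.

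The homological comparison is then pure linear algebra over $\Z_3$. Writing $m,\ell$ for a meridian–longitude basis of $T^2=\partial X$, the Mayer–Vietoris sequence with $\Z_3$-coefficients for $D_{K_j}=X\cup V$ collapses (the solid-torus core absorbs one generator) to an isomorphism $H_1(D_{K_j};\Z_3)\cong H_1(X;\Z_3)/\langle\bar m\rangle$, and likewise $H_1(D_{K_{j+1}};\Z_3)\cong H_1(X;\Z_3)/\langle a\bar m+b\bar\ell\rangle$ for the new filling slope $am+b\ell$. Thus both groups are quotients of the single $\Z_3$-vector space $H_1(X;\Z_3)$ by one cyclic relation, so each of $n_{K_j},n_{K_{j+1}}$ differs from $\dim H_1(X;\Z_3)$ by at most one, whence $n_{K_j}\le n_{K_{j+1}}+1$, completing the induction.

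The main obstacle is the Montesinos correspondence in the second step: making rigorous that the branched double cover of the two-strand trivial tangle is a solid torus, that the exterior $X$ is genuinely independent of the crossing, and that a crossing change is realized as re-gluing $V$ along $T^2$ with $X$ held fixed (including the matching of slopes and framings). Once this surgery description is secured, the base case and the linear-algebra step are routine. Alternatively, one may run a four-dimensional argument: an unknotting sequence of length $u$ exhibits $K$ as the boundary of a disk immersed in $B^4$ with $u$ transverse double points, and the double cover of $B^4$ branched over this disk is a $4$-manifold $W$ with $\partial W=D_K$, $H_1(W;\Z_3)=0$ and $\mathrm{rank}\,H_2(W)=u$, so the long exact sequence of $(W,\partial W)$ presents $H_1(D_K;\Z_3)$ on $u$ generators; I would nonetheless prefer the three-dimensional surgery argument as the cleaner route to the sharp constant.
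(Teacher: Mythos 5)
The paper gives no proof of this statement to compare against: it is Wendt's classical inequality, imported verbatim from Miyazawa~\cite[Corollary~1.4]{Miy}, and is used in the paper purely as a black box (to get $u(W(3,4k))\geq 2$ from $n_{W(3,4k)}=2$ in Corollary~\ref{nW3n}). So your proposal must be judged on its own merits, and on those merits it is correct: it is the standard Montesinos-trick proof of Wendt's theorem. The telescoping reduction to the one-step estimate $n_{K_j}\leq n_{K_{j+1}}+1$, the base case $D_{0_1}=S^3$, and the linear algebra are all sound; once $D_{K_j}=X\cup_{T^2}V$ and $D_{K_{j+1}}=X\cup_{T^2}V'$ are exhibited as two Dehn fillings of the same manifold $X$, the filling formula $H_1(X\cup_{T^2}V;\Z_3)\cong H_1(X;\Z_3)/\langle[\text{slope}]\rangle$ pins both $n_{K_j}$ and $n_{K_{j+1}}$ within $1$ of $\dim_{\Z_3}H_1(X;\Z_3)$, exactly as you argue. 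The ingredient you flag as the main obstacle --- that the double cover of a ball branched over two trivial arcs is a solid torus, that $X$ is untouched by the crossing change, and hence that the crossing change amounts to re-gluing $V$ along $\partial X$ --- is precisely the Montesinos trick; since both the original and the switched crossing give trivial two-string tangles, both fillings are indeed by solid tori, and this is a standard citable fact rather than something you would need to reprove. Note moreover that with field coefficients your estimate never uses the actual slopes or framings, only that each filling kills one cyclic subgroup, so the ``matching of slopes and framings'' you worry about can be bypassed entirely. One caution on your four-dimensional alternative: the branched double cover of $B^4$ over an \emph{immersed} disk is not directly defined --- one must first resolve or excise the double points --- so as stated that sketch is the weaker of your two routes, and your preference for the three-dimensional surgery argument is the right call. (Historically, Wendt's own proof ran instead through a Goeritz-type presentation matrix, showing a crossing change alters a presentation of $H_1(D_K)$ by one generator and one relation; that is a third, more elementary route to the same inequality.)
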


It seems unlikely that new information about unknotting numbers can be obtained from $V_L(e^{i\pi/3})$, though calculation of these may give a quick way of computing $n_L$~(\cite[page~351]{LM}).
However, Traczyk~\cite{Tra} and later Stoimenow~\cite{Sto} have demonstrated how evaluations of link polynomials can be used to determine unknotting numbers for some of the knots.

In this paper, we shall evaluate the Jones polynomial $V_L(t)$ of $L = W(3,n)$, $W(p,2)$ at $t=e^{i\pi/3},-1$.
These specific values are considered in order to obtain information on the unknotting number and the knot determinant for these two families of weaving knots.

Mishra \& Staffeldt~\cite{MS} obtained the formula
\begin{equation}\label{eq:JonesW3n}
	\begin{split}
		V_{W(3,n)}(t) & = t^{-n-1} \left[ (1+t)^2 C_{n,0}(t) + (1+t)
		\left(C_{n,1}(t) + C_{n,2}(t)\right) t^2 + \right.\\
		& \qquad\left. \left(C_{n,12}(t) + C_{n,21}(t)\right) t^4 \right],
	\end{split}
\end{equation}
where the polynomials $C_{n,0}(t), C_{n,1}(t), C_{n,2}(t), C_{n,12}(t), C_{n,21}(t) \in \Z[t]$
have a recursive definition ---
\begin{subequations}\label{eq:oldJonesW3n}
	\begin{align}
		C_{n,0}(t)  &= -t(t-1) C_{n-1,1}(t) + t^2 C_{n-1,21}(t),\\
		C_{n,1}(t)  &= -(t-1) C_{n-1,0}(t) - (t-1)^2 C_{n-1,1}(t),\\
		C_{n,2}(t)  &= t C_{n-1,1}(t),\\
		C_{n,12}(t) &= C_{n-1,0}(t) + (t-1) C_{n-1,1}(t),\\
		C_{n,21}(t) &= t C_{n-1,12}(t)
	\end{align}
\end{subequations}
--- which we have reformulated in terms of matrices as:
\begin{Thm}[{cf \cite[page~31]{MS}}]\label{JonesW3n}
	Let
	\begin{equation*}
		C_n(t) = \begin{bmatrix}
					C_{n,0}(t) & C_{n,1}(t) & C_{n,2}(t) & C_{n,12}(t) & C_{n,21}(t)
		\end{bmatrix}^T,
	\end{equation*}
	where $n$ is any positive integer, $C_{n,0}(t), C_{n,1}(t), C_{n,2}(t), C_{n,12}(t), C_{n,21}(t) \in \Z[t]$, and
	\begin{equation*}
		C_1(t) = \begin{bmatrix}
					0 & -(t-1) & 0 & 1 & 0
		\end{bmatrix}^T.
	\end{equation*}
	For $n\geq 2$, suppose that $C_n(t) = M(t)\,C_{n-1}(t)$, where
	\begin{equation*}
		M(t) = \begin{bmatrix}
					0 & -t(t-1) & 0 & 0 & t^2\\
					-(t-1) & -(t-1)^2 & 0 & 0 & 0\\
					0 & t & 0 & 0 & 0\\
					1 & t-1 & 0 & 0 & 0\\
					0 & 0 & 0 & t & 0
		\end{bmatrix}.
	\end{equation*}
	Then the Jones polynomial $V_{W(3,n)}(t)$ of the weaving knot $W(3,n)$ is given by~\eqref{eq:JonesW3n}.
\end{Thm}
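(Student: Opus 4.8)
The plan is to treat Theorem~\ref{JonesW3n} as a faithful repackaging of the scalar data \eqref{eq:oldJonesW3n}--\eqref{eq:JonesW3n} already supplied by Mishra and Staffeldt, so that the genuine work reduces to two verifications: that the proposed matrix recursion reproduces their five recurrences, and that the seed vector $C_1(t)$ matches their $n=1$ data. Once both are in hand, the sequence $(C_n(t))_{n\ge 1}$ defined by the matrix rule coincides term-by-term with the Mishra--Staffeldt sequence, and formula~\eqref{eq:JonesW3n} then holds verbatim because it holds for their sequence.

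For the recursion step I would simply expand the matrix--vector product $M(t)\,C_{n-1}(t)$ one row at a time and compare each entry with the corresponding line of \eqref{eq:oldJonesW3n}. The first row of $M(t)$ paired with $C_{n-1}(t)$ returns $-t(t-1)C_{n-1,1}(t) + t^2 C_{n-1,21}(t)$, which is exactly $C_{n,0}(t)$; the second row returns $-(t-1)C_{n-1,0}(t) - (t-1)^2 C_{n-1,1}(t) = C_{n,1}(t)$; and the third, fourth, and fifth rows return $t\,C_{n-1,1}(t)$, $C_{n-1,0}(t)+(t-1)C_{n-1,1}(t)$, and $t\,C_{n-1,12}(t)$, agreeing with $C_{n,2}(t)$, $C_{n,12}(t)$, and $C_{n,21}(t)$ respectively. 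This shows componentwise that $C_n(t)=M(t)C_{n-1}(t)$ is identical to the system \eqref{eq:oldJonesW3n}.

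For the base case I would obtain $C_1(t)=\begin{bmatrix}0 & -(t-1) & 0 & 1 & 0\end{bmatrix}^T$ from the underlying Mishra--Staffeldt construction, in which the $C_{n,j}(t)$ are the coefficients of the braid $(\s_1\s_2^{-1})^n$ expanded in the five-element Temperley--Lieb basis of $TL_3$ indexed by $0,1,2,12,21$; for $n=1$ this is just the resolution of the single word $\s_1\s_2^{-1}$, which produces the stated coefficients. As an independent sanity check, substituting these values into \eqref{eq:JonesW3n} collapses to $t^{-2}\bigl[-(t^2-1)t^2 + t^4\bigr]=1$, which is correct since $W(3,1)$, being a one-component two-crossing diagram, is the unknot. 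With the seed confirmed and the recursion verified, a one-line induction on $n$ gives the equality of the two sequences for all $n\ge 1$, and hence~\eqref{eq:JonesW3n}.

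I do not expect a serious obstacle, as the matrix identity is a convention-free finite computation; the one point demanding care is the bookkeeping in passing from the Kauffman bracket normalization used to define the $C_{n,j}(t)$ to the Jones variable $t$, together with the writhe and sign corrections baked into \eqref{eq:JonesW3n}. A misplaced factor there is the most plausible source of error, and it would surface as a discrepancy in the base case rather than in the recursion; the numerical check $V_{W(3,1)}(t)=1$ is precisely what guards against it.
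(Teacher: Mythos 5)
Your proposal is correct and matches the paper's (implicit) justification: the paper offers no separate proof of Theorem~\ref{JonesW3n}, presenting it as a direct matrix repackaging of the Mishra--Staffeldt recurrences \eqref{eq:oldJonesW3n} with the seed $C_1(t)$ taken from \cite{MS}, which is exactly the row-by-row verification and base-case check you carry out. Your sanity check $V_{W(3,1)}(t)=1$ is a sensible extra safeguard but adds nothing beyond what the paper assumes.
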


Using Theorem~\ref{JonesW3n}, rewrite~\eqref{eq:JonesW3n} as:
\begin{equation}\label{eq:newJonesW3n}
	V_{W(3,n)}(t) = t^{-n-1}\,Z(t)\,C_n(t) = Z(t)\left(t^{-n-1} M^{n-1}(t)\right) C_1(t),
\end{equation}
where the matrix
\begin{equation*}
	Z(t) = \left[\begin{matrix}
		(1+t)^2 & (1+t) t^2 & (1+t) t^2 & t^4 & t^4
	\end{matrix}\right].
\end{equation*}

In this paper, we use Theorem~\ref{JonesW3n} to calculate the dimension of the vector space $H_1(D_{W(3,n)};\Z_3)$ over the field $\Z_3$, which in turn yields a lower bound of the unknotting number of $W(3,4k)$, see Corollary~\ref{nW3n}.
By using Theorem~\ref{JonesW3n} again, we derive a knot determinant formula for $W(3,n)$ in Theorem~\ref{detW3n}.
Further, we give a recursive formula for the Jones polynomial of $W(p,2)$ in Theorem~\ref{JonesWp2}.
As applications of Theorem~\ref{JonesWp2}, we calculate the dimension of the vector space $H_1(D_{W(p,2)};\Z_3)$ over $\Z_3$ and derive a knot determinant formula for $W(p,2)$, see Corollary~\ref{nWp2} and Theorem~\ref{detWp2}, respectively.

\section{Some invariants of the weaving knot $W(3,n)$}

\subsection{The dimension of the vector space $H_1(D_{W(3,n)};\Z_3)$ over $\Z_3$}

Let $w=e^{i\pi/3}$ which satisfies the irreducible polynomial $f(x) = x^2-x+1$ in the polynomial ring $\Q[x]$.
We consider the subfield $\frac{\Q[x]}{\langle x^2-x+1\rangle} \cong \Q(w)$ 
of $\C$ to work with matrices $C_n = C_n(w), M = M(w)$, and $Z = Z(w)$ whose entries are from the field $\Q(w)$. 

Put $A_n = w^{-n-1}M^{n-1}$.
Then \eqref{eq:newJonesW3n} reduces to $V_{W(3,n)}(w) = ZA_nC_1$.

\begin{lem}\label{polyM}
	If $g(x)=x^6+w x^2 \in \Q(w)[x]$, then the matrix $M$ satisfies the polynomial $g(x)$.
\end{lem}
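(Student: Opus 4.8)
The plan is to reduce everything to the Cayley--Hamilton theorem. I would compute the characteristic polynomial $\chi_M(x) = \det(xI - M)$ of $M$ and show that $g$ is a polynomial multiple of it; concretely, I expect to prove the exact identity $g(x) = (x+w)\,\chi_M(x)$. Since Cayley--Hamilton gives $\chi_M(M) = 0$, this immediately yields $g(M) = (M + wI)\,\chi_M(M) = 0$, which is the assertion.

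First I would compute $\chi_M(x)$ for a general parameter $t$, i.e. for the matrix $M(t)$, specializing to $t = w$ only at the end. The computation is made tractable by the structure of $M(t)$: its third column vanishes identically, so $xI - M(t)$ has a single nonzero entry $x$ in its third column, and expanding the determinant along that column peels off a factor of $x$; a further cofactor expansion peels off a second factor of $x$. Carrying this out I expect to obtain
\begin{equation*}
	\chi_M(x) = x^2\bigl(x - t\bigr)\bigl(x^2 + (t^2 - t + 1)x + t^2\bigr),
\end{equation*}
where the factor $x - t$ becomes visible once one notices that $x = t$ is a root of the cubic remaining after the $x^2$ is removed (and can then be verified by matching, e.g., the coefficient $(t-1)^2$ of $x^4$ against $-\operatorname{tr}(M)$).

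Next I would specialize to $t = w$. The crucial point is the defining relation $w^2 - w + 1 = 0$: it forces the linear coefficient of the quadratic factor to vanish, so the quadratic collapses and
\begin{equation*}
	\chi_M(x) = x^2\,(x - w)\,(x^2 + w^2).
\end{equation*}
Then, using that $w$ is a primitive sixth root of unity, so that $w^3 = -1$ and hence $w^4 = -w$, I would rewrite the target as
\begin{equation*}
	g(x) = x^6 + w x^2 = x^2\bigl(x^4 - w^4\bigr) = x^2\,(x - w)(x + w)(x^2 + w^2) = (x + w)\,\chi_M(x),
\end{equation*}
which exhibits $\chi_M$ as a divisor of $g$ and closes the argument.

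The main obstacle is the determinant computation for the $5\times 5$ matrix together with the factorization of the degree-three quotient; the zero column and the near shift-like structure of $M$ keep this manageable, and after that only the two elementary facts $w^2 - w + 1 = 0$ and $w^4 = -w$ are needed. Should the explicit factorization prove awkward, an alternative is to check directly that every root of $\chi_M$ (namely $0$ with multiplicity two, $w$, and $\pm i w$) is a root of $g$ of at least the same multiplicity; but the clean identity $g = (x+w)\,\chi_M$ is what I would aim for.
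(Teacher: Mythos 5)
Your proposal is correct and takes essentially the same route as the paper: both compute the characteristic polynomial of $M$, establish the exact identity $g(x) = (x+w)\,\chi_M(x)$ (the paper writes it as $g(x) = -(x+w)\,h(x)$ with $h(x) = \det(M - xI)$, which is the same identity up to the sign convention $\chi_M(x) = -h(x)$ for a $5\times 5$ matrix), and conclude via Cayley--Hamilton. The only cosmetic difference is in verifying the divisibility --- you factor the generic characteristic polynomial as $x^2(x-t)\bigl(x^2+(t^2-t+1)x+t^2\bigr)$, which is correct, and then specialize using $w^2-w+1=0$ and $w^4=-w$, whereas the paper carries out the polynomial division directly at $t=w$.
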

\begin{proof}
	Let $h(x)\in \Q(w)[x]$ be the characteristic polynomial of the matrix $M$.
	Then
	\begin{align*}
		h(x) & = \det(M-xI)\\
		& = \left|\begin{matrix}
			-x & 1 & 0 & 0 & w-1\\
			-w+1 & w-x & 0 & 0 & 0\\
			0 & w & -x & 0 & 0\\
			1 & w-1 & 0 & -x & 0\\
			0 & 0 & 0 & w & -x
		\end{matrix}\right|\\
		& = -x^5 + wx^4 +(1-w)x^3 - x^2.
	\end{align*}
	Thus $h(M) = \mathbf{0}$.
	Now
	\begin{align*}
		g(x) & = x^6 + wx^2\\
		& = x \left(wx^4 + (1-w)x^3 - x^2 - h(x)\right) + wx^2\\
		& = wx^5 + (1-w)x^4 - x^3 + wx^2 - xh(x)\\
		& = w \left(wx^4 + (1-w)x^3 - x^2 - h(x)\right) + (1-w)x^4 - x^3 + wx^2 - xh(x)\\
		& = \left(w^2-w+1\right) x^4 - \left(w^2-w+1\right) x^3 -wx^2 + wx^2 - (x+w)h(x)\\
		& = -(x+w)h(x).
	\end{align*}
	Hence $g(M) = M^6 + wM^2 = -(M+wI)\,h(M) = \mathbf{0}$.
\end{proof}

\begin{lem}\label{Ai}
	For every integer $n\geq 1$, we have $A_{3+4n} = A_3$.
	Hence $A_{4+4n} = A_4$, $A_{5+4n} = A_5$, and $A_{6+4n} = A_6$ for each integer $n\geq 1$.
\end{lem}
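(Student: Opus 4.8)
The plan is to reduce the whole statement to the single matrix identity $M^6 = -wM^2$ supplied by Lemma~\ref{polyM}, together with the elementary fact that $w = e^{i\pi/3}$ satisfies $w^3 = e^{i\pi} = -1$, hence $w^{-3} = -1$ and $w^6 = 1$. First I would write both sides of the main assertion in a common form, namely $A_{3+4n} = w^{-4-4n}M^{2+4n}$ and $A_3 = w^{-4}M^2$. Proving $A_{3+4n} = A_3$ then amounts to establishing the identity $\bigl(w^{-4n}M^{4n}\bigr)M^2 = M^2$ for every $n\geq 1$.

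The core of the argument is a base relation obtained by scaling Lemma~\ref{polyM}: multiplying $M^6 = -wM^2$ by $w^{-4}$ gives $w^{-4}M^6 = -w^{-3}M^2 = M^2$, where the last step uses $w^{-3} = -1$. Setting $P = w^{-4}M^4$, this says precisely $PM^2 = M^2$. A one-line induction then upgrades it to $P^nM^2 = M^2$ for all $n\geq 1$: granting $P^nM^2 = M^2$, one has $P^{n+1}M^2 = P\bigl(P^nM^2\bigr) = PM^2 = M^2$. Since scalars commute with $M$, we have $P^n = w^{-4n}M^{4n}$, so this is exactly the identity isolated above, and back-substitution yields $A_{3+4n} = w^{-4}\bigl(P^nM^2\bigr) = w^{-4}M^2 = A_3$.

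For the three ``Hence'' equalities I would avoid repeating the computation by using the one-step recursion $A_{m+1} = w^{-1}M\,A_m$, which is immediate from the definition $A_m = w^{-m-1}M^{m-1}$. Applying it successively to the identity $A_{3+4n} = A_3$ gives $A_{4+4n} = w^{-1}M\,A_{3+4n} = w^{-1}M\,A_3 = A_4$, and in the same way $A_{5+4n} = A_5$ and $A_{6+4n} = A_6$.

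The one point that requires care---more a pitfall than a serious obstacle---is that $M$ is singular: its characteristic polynomial $h(x) = -x^2\bigl(x^3 - wx^2 + (w-1)x + 1\bigr)$ has $0$ as a double root, so one may not simply cancel $M^2$ from both sides of $PM^2 = M^2$ and conclude $P = I$ (indeed $P \neq I$, since $P = I$ would force $M$ to be invertible). The argument is therefore deliberately arranged to keep the trailing factor $M^2$ at every stage and to prove the matrix identities $P^nM^2 = M^2$ directly, never inverting a power of $M$.
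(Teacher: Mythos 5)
Your proposal is correct and follows essentially the same route as the paper: both proofs rest on the relation $M^6=-wM^2$ from Lemma~\ref{polyM} together with $w^3=-1$ (so $w^6=1$), run a one-line induction to absorb blocks of $w^{-4}M^4$, and obtain the three ``Hence'' equalities by multiplying $A_{3+4n}=A_3$ by powers of $w^{-1}M$. Your packaging of the induction as $P^nM^2=M^2$ with $P=w^{-4}M^4$, and your explicit warning that the singular factor $M^2$ must never be cancelled, are just a cleaner presentation of the paper's argument, not a different one.
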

\begin{proof}
	We shall use mathematical induction.
	For $n=1$, we have $A_{7} = w^{-8}M^6$.
	By Lemma~\ref{polyM}, $w^{-8}M^6 = w^{-2}(-wM^2) = -w^{-1}M^2 = w^{-4}M^2 = A_3$.
	
	Assume that $A_{3+4k} = w^{-3-4k-1}M^{3+4k-1} = w^{-4-4k}M^{4k+2} = A_3$, for some integer $k\geq 2$.
	Using Lemma~\ref{polyM} and our induction hypothesis, we get
	\begin{align*}
		A_{3+4(k+1)} & = w^{-3-4(k+1)-1}M^{3+4(k+1)-1}\\
		& = w^{-8+4k}M^{4k+6}
		= w^{-2+4k}M^{4k}(-wM^2)\\
		& = -w^{-1+4k}M^{4k+2}
		= w^{-4+4k}M^{4k+2}
		= A_{3+4k} = A_3.
	\end{align*}
	This completes the proof of the fact that $A_3 = A_7 = A_{11} = A_{15} =\cdots$.
	The remaining part follows directly after multiplying $A_{3+4n} = A_3$ by $w^{-1}M, w^{-2}M^2$, and $w^{-3}M^3$ respectively.
\end{proof}

Due to Lemma~\ref{Ai}, we only need to know matrices $A_1,A_2,\ldots,A_6$ to find all the elements of the sequence of matrices $\{A_i\}$.

\begin{thm}\label{eJonesW3n}
	For the weaving knot $W(3,n)$, the value of its Jones polynomial at $t=w$ is given by
	\begin{equation}\label{eq:eJonesW3n}
		V_{W(3,n)}(w) = \begin{cases}
			3, & \text{if } n = 4k, \text{ where } k\geq 1,\\
			-1, & \text{if } n = 4k-2, \text{ where } k\geq 1,\\
			1, & \text{otherwise}.
		\end{cases} 
	\end{equation}
\end{thm}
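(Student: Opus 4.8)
The plan is to combine the reduction $V_{W(3,n)}(w) = Z A_n C_1$ with the periodicity proved in Lemma~\ref{Ai}. That lemma gives $A_{n+4} = A_n$ for every $n \geq 3$, so the scalar $Z A_n C_1$ is periodic in $n$ with period $4$ once $n \geq 3$; in particular $V_{W(3,n)}(w)$ assumes at most four distinct values for $n \geq 3$, one for each residue of $n$ modulo $4$. It therefore suffices to evaluate $V_{W(3,n)}(w)$ for the six indices $n = 1, 2, 3, 4, 5, 6$. The values at $n = 3, 4, 5, 6$ realize the residues $3, 0, 1, 2 \pmod 4$ and hence determine $V_{W(3,n)}(w)$ for all $n \geq 3$, while the values at $n = 1, 2$ dispose of the two remaining small cases.

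For the explicit evaluation I would generate the vectors $C_1, C_2, \ldots, C_6$ recursively from the initial vector $C_1$ of Theorem~\ref{JonesW3n} via the matrix recursion $C_n = M C_{n-1}$, performing all arithmetic in $\Q(w)$. At each step I would simplify using the defining relation $w^2 = w - 1$ together with its consequences $w^3 = -1$ and $w^6 = 1$, keeping every entry in the normal form $a + b w$ with $a, b \in \Q$. Computing the scalar $Z C_n$ and multiplying by $w^{-n-1}$ then produces $V_{W(3,n)}(w)$. Carrying this out yields the values $1, -1, 1, 3, 1, -1$ for $n = 1, 2, 3, 4, 5, 6$, respectively.

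It then remains to read off the piecewise formula by residue class. The computation shows that $n \equiv 0 \pmod 4$ gives $3$ (the case $n = 4k$), that $n \equiv 2 \pmod 4$ gives $-1$ (the case $n = 4k - 2$), and that $n \equiv 1, 3 \pmod 4$ both give $1$ (the remaining case). Since the directly computed values at $n = 1$ and $n = 2$ agree with the $n \equiv 1$ and $n \equiv 2$ patterns, the formula is valid uniformly for all $n \geq 1$, which is precisely~\eqref{eq:eJonesW3n}.

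The difficulty here is computational rather than conceptual: the correctness hinges on the repeated matrix--vector products and the disciplined reduction of powers of $w$. Because the entries of $M$ involve both $w$ and $w - 1$, each multiplication spawns higher powers of $w$ that must be collapsed via $w^2 = w - 1$, and a single arithmetic slip would propagate through the whole recursion. I would mitigate this by maintaining each $C_n$ in normal form throughout and by cross-checking against a known case, such as $W(3,2) = 4_1$, whose Jones polynomial $t^{-2} - t^{-1} + 1 - t + t^2$ evaluates to $-1$ at $t = w$ and so confirms the $n = 2$ entry.
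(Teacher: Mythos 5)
Your proposal is correct and follows essentially the same route as the paper: both reduce $V_{W(3,n)}(w) = Z A_n C_1$ via the period-$4$ behaviour of Lemma~\ref{Ai} to a finite check, and the paper likewise verifies the representative cases ($Z A_4 C_1 = 3$, $Z A_6 C_1 = -1$, the odd cases giving $1$, and the sporadic $n=2$ case directly) by explicit arithmetic in $\Q(w)$. Your bookkeeping of the computation as iterated products $C_n = M C_{n-1}$ in the normal form $a + bw$ is merely a reorganization of the paper's evaluation of powers of $M(w)$, and your computed values $1, -1, 1, 3, 1, -1$ for $n = 1, \ldots, 6$ agree with the paper's.
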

\begin{proof}
	Let $k$ be any positive integer.
	One can calculate
	\begin{align*}
		V_{W(3,4k)}(w) & = Z A_{4k} C_1\\
		& = Z A_4 C_1\\
		& = \begin{bmatrix}
			3w & w-2 & w-2 & -w & -w
		\end{bmatrix}\times\\
		&\qquad\quad \left(w^{-5}
		\begin{bmatrix}
			0 & 1 & 0 & 0 & w-1\\
			-w+1 & w & 0 & 0 & 0\\
			0 & w & 0 & 0 & 0\\
			1 & w-1 & 0 & 0 & 0\\
			0 & 0 & 0 & w & 0
		\end{bmatrix}^3\right)\times
		\begin{bmatrix}
			0\\ -w+1\\ 0\\ 1\\ 0
		\end{bmatrix}\\
		& = 3.
	\end{align*}
	Similarly, one can obtain $V_{W(3,4k-2)}(w) = Z A_{4k-2} C_1 = Z A_6 C_1 = -1$ for every $k\geq 2$ and $V_{W(3,n)}(w) = 1$, for every $n$ such that $n = 4k-1, 4k-3$.
	It can be checked directly that $V_{W(3,2)}(w) = Z A_{2} C_1 = -1$.
	This completes the proof.
\end{proof}

\begin{cor}\label{nW3n}
	Suppose that $k$ is any positive integer.
	For the weaving knot $W(3,n)$, $n_{W(3,n)} = \dim H_1(D_{W(3,n)};\Z_3) = 0$ except for the case when $n=4k$.
	If $n=4k$, then $n_{W(3,n)} = \dim H_1(D_{W(3,n)};\Z_3) = 2$.
	Hence if $\gcd(3,4k) = 1$, then $u(W(3,4k)) \geq 2$.
\end{cor}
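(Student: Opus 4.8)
The plan is to read off $n_{W(3,n)}$ directly from Theorem~\ref{eJonesW3n} by comparing moduli in the Lickorish--Millett formula (Theorem~\ref{LickMill}), and then to feed the resulting value into Wendt's inequality (Theorem~\ref{Miya}). Since the genuinely hard computation---the explicit evaluation of $V_{W(3,n)}(w)$---has already been carried out in Theorem~\ref{eJonesW3n}, what remains is essentially bookkeeping.

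First I would take absolute values in Theorem~\ref{LickMill}. As $|i^{\mu(L)-1}| = 1$ and $|i\sqrt 3| = \sqrt 3$, the phase factors and the sign $\pm$ drop out, leaving
\[
	\bigl|V_{W(3,n)}(w)\bigr| = (\sqrt 3)^{\,n_{W(3,n)}} = 3^{\,n_{W(3,n)}/2}.
\]
Next I would substitute the three cases of Theorem~\ref{eJonesW3n}. When $n\not\equiv 0 \pmod 4$ we have $V_{W(3,n)}(w)\in\{1,-1\}$, so the modulus equals $1 = 3^{0}$, forcing $n_{W(3,n)} = 0$. When $n = 4k$ we have $V_{W(3,n)}(w) = 3 = 3^{1}$, forcing $n_{W(3,n)}/2 = 1$, that is $n_{W(3,n)} = 2$. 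Since $n_{W(3,n)}$ is a nonnegative integer and the only way $3^{m/2}$ can equal $1$ or $3$ is $m = 0$ or $m = 2$, these values are pinned down uniquely, which settles the first two assertions.

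For the unknotting bound I would first check that $W(3,4k)$ is a knot rather than a multicomponent link, since Theorem~\ref{Miya} is stated for knots. The closure of $B_W(3,n)$ has a single component precisely when the underlying permutation---a power of a $3$-cycle---is again a $3$-cycle, which happens exactly when $3\nmid n$; in the present situation this is the hypothesis $\gcd(3,4k) = 1$ (equivalently $3\nmid k$). Granting this, $W(3,4k)$ is a knot with $n_{W(3,4k)} = 2$ by the previous paragraph, and Theorem~\ref{Miya} yields $u(W(3,4k)) \geq n_{W(3,4k)} = 2$.

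The steps are all short, so there is no serious obstacle; the only points requiring care are that the modulus comparison alone determines $n_{W(3,n)}$ because the sign $\pm$ and the phase $i^{\mu(L)-1}$ carry no magnitude information, and that the component count must be verified so that Wendt's inequality is legitimately applicable---this is exactly why the coprimality hypothesis $\gcd(3,4k)=1$ appears in the statement.
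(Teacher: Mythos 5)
Your proposal is correct and takes essentially the same route as the paper's own proof: it reads $n_{W(3,n)}$ off from Theorem~\ref{eJonesW3n} via Theorem~\ref{LickMill} and then applies Theorem~\ref{Miya}, with the hypothesis $\gcd(3,4k)=1$ ensuring $\mu(W(3,4k))=1$ so that Wendt's inequality for knots applies. You even make explicit two points the paper leaves implicit, namely the modulus comparison $\bigl|V_{W(3,n)}(w)\bigr|=3^{n_{W(3,n)}/2}$ and the cycle count of the underlying permutation of $B_W(3,n)$.
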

\begin{proof}
	The proof of $n_{W(3,n)} = \dim H_1(D_{W(3,n)};\Z_3) = 0$ when $n\neq 4k$ follows immediately from Theorem~\ref{eJonesW3n} and Theorem~\ref{LickMill}.
	If $n=4k$ and $\gcd(3,4k) = 1$, then $\mu(W(3,4k))$ is $1$.
	By Theorem~\ref{eJonesW3n}, we obtain that $n_{W(3,4k)}=2$ and by Theorem~\ref{Miya}, we deduce that $u(W(3,4k)) \geq 2$.
\end{proof}

It is known that $u(8_{18})=2$, where $8_{18}=W(3,4)$.
This fact can also be verified as follows: By Corollary~\ref{nW3n}, $u(8_{18}) \geq n_{8_{18}} = 2$.
Further,
\begin{align*}
	\mathbf{cl}(W(3,4)) &=
	\mathbf{cl}(\s_1 \s_2^{-1} \s_1 \s_2^{-1} \s_1 \s_2^{-1} \s_1 \s_2^{-1}) \rightarrow
	\mathbf{cl}(\s_1 \s_2 \s_1 \s_2^{-1} \s_1 \s_2^{-1} \s_1 \s_2^{-1})\\
	&\quad\rightarrow
	\mathbf{cl}(\s_1 \s_2 \s_1 \s_2^{-1} \s_1^{-1} \s_2^{-1} \s_1 \s_2^{-1}) =
	\mathbf{cl}(\s_1 \s_2^{-1}) = 0_1
\end{align*}
is an unknotting sequence that transforms $W(3,4)$ into $0_1$ by $2$ crossing changes.
Thus $u(W(3,4)) \leq 2$.
Hence $u(W(3,4)) = 2$.

This seems intriguing that the Jones polynomial detects the property that $u(W(3,4k))\neq 1$ for every positive integer $k$.
However, it fails to provide any information on the unknotting number of $W(3,n)$, as $n_{W(3,n)}=0$, when $n\neq 4k$.

\subsection{A formula for knot determinant of $W(3,n)$}

On substituting $t=-1$ in~\eqref{eq:JonesW3n}, we obtain that
\begin{equation*}
	\vert V_{W(3,n)}(-1) \vert = \vert C_{n,12}(-1) + C_{n,21}(-1) \vert = \det (W(3,n)).
\end{equation*}

It is clear from \eqref{eq:oldJonesW3n} that for any $n$, none of the $C_{n,12}(t), C_{n,21}(t), C_{n,1}(t),$ and $C_{n,0}(t)$ depend on $C_{_{-},2}(t)$.
Henceforth, we exclude $C_{_{-},2}(t)$ from our calculations.

\begin{thm}\label{detW3n}
	Let $\varphi = \frac{1 + \sqrt{5}}{2}$.
	Then the determinant of the weaving knot $W(3,n)$ can be written as
	\begin{equation}\label{eq:detW3n}
		\det (W(3,n)) = -2 + (1 + \varphi)^n + (1 - \varphi^{-1})^n.
	\end{equation}
\end{thm}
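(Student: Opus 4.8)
The plan is to evaluate the determinant as $\det(W(3,n)) = |C_{n,12}(-1) + C_{n,21}(-1)|$ by tracking the relevant entries of the recursion \eqref{eq:oldJonesW3n} at $t = -1$. Since the excerpt notes that $C_{n,12}, C_{n,21}, C_{n,1}, C_{n,0}$ do not depend on $C_{\_,2}$, the first step is to restrict attention to the four coupled sequences $C_{n,0}, C_{n,1}, C_{n,12}, C_{n,21}$ and substitute $t = -1$ into their recursions. At $t = -1$ we have $-t(t-1) = -2$, $-(t-1) = 2$, $-(t-1)^2 = -4$, $t-1 = -2$, and $t^2 = 1$, so the four recurrences become a linear system with constant integer coefficients. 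Collecting $u_n = C_{n,0}(-1)$, $v_n = C_{n,1}(-1)$, $p_n = C_{n,12}(-1)$, $q_n = C_{n,21}(-1)$, I would write the evolution as a $4\times 4$ matrix $N$ acting on the vector $(u_n, v_n, p_n, q_n)^T$, reading off the initial vector from the given $C_1(t)$ evaluated at $t=-1$, namely $(0, 2, 1, 0)^T$.

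The core of the proof is then a closed-form evaluation of $(p_n + q_n)$ in terms of the eigenstructure of $N$. I would compute the characteristic polynomial of $N$ and locate its eigenvalues. Because the target formula involves $\varphi = \frac{1+\sqrt 5}{2}$ and the combinations $1 + \varphi$ and $1 - \varphi^{-1}$, I expect the characteristic polynomial to factor so that $1 + \varphi$ and $1 - \varphi^{-1}$ appear among the eigenvalues (note that $1 + \varphi = \varphi^2$ and $1 - \varphi^{-1} = 2 - \varphi = \varphi^{-2}$, using $\varphi^2 = \varphi + 1$; these are reciprocal, with product $1$ and sum $3$, so they are roots of $x^2 - 3x + 1$). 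I would guess that $N$ has eigenvalues $\varphi^2$, $\varphi^{-2}$, together with a pair accounting for the constant $-2$ term in \eqref{eq:detW3n} (most plausibly a repeated or easily-identified eigenvalue contributing the $-2$). Diagonalizing, or equivalently solving the scalar linear recurrence that $s_n := p_n + q_n$ satisfies, yields $s_n = \alpha (\varphi^2)^n + \beta (\varphi^{-2})^n + (\text{lower-order terms})$, and matching the first few values $s_1, s_2, s_3$ pins down the constants to produce exactly $-2 + (1+\varphi)^n + (1 - \varphi^{-1})^n$.

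Concretely, I would derive a single linear recurrence purely in $s_n = p_n + q_n$ by eliminating the auxiliary variables: using $q_n = -p_{n-1}$ (from $C_{n,21} = t\,C_{n-1,12}$ at $t=-1$) and $p_n = u_{n-1} - 2 v_{n-1}$, and expressing $u_n, v_n$ through their own recursions, I can reduce everything to a constant-coefficient recurrence of order at most four. After establishing this recurrence and verifying the base cases by direct computation of $\det(W(3,1)), \det(W(3,2)), \det(W(3,3))$, the formula \eqref{eq:detW3n} follows by a routine induction, or immediately from the eigenvalue decomposition once the constants $\alpha, \beta$ and the contribution of the constant term are fixed. Finally I would confirm the sign, checking that $C_{n,12}(-1) + C_{n,21}(-1)$ is nonnegative (or that its absolute value is the stated expression) so that the determinant equals the displayed quantity rather than its negative.

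The main obstacle will be the bookkeeping in the eigenvalue analysis: the $4\times 4$ matrix $N$ may not be diagonalizable, and isolating the constant $-2$ contribution requires correctly identifying the algebraic multiplicity and generalized eigenvectors associated with the non-golden-ratio eigenvalues. If $N$ turns out to have a defective eigenvalue, the cleaner route is to bypass the matrix entirely and work directly with the scalar recurrence for $s_n$, guessing its minimal polynomial as $(x^2 - 3x + 1)(x - 1)$ — whose roots $\varphi^2, \varphi^{-2}, 1$ match the three terms in \eqref{eq:detW3n}, with the root $1$ producing the constant $-2$ — and then verifying this recurrence holds by substitution. That reduces the proof to checking a fixed-degree recurrence on $s_n$ and matching finitely many initial conditions, which is purely mechanical.
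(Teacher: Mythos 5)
Your plan is essentially the paper's own proof: the authors delete the $C_{-,2}$ coordinate to obtain exactly your $4\times 4$ transition matrix at $t=-1$ (their $\widetilde M$, with initial vector $\widetilde{C_1}=(0,2,1,0)^T$), find it diagonalizable with four distinct eigenvalues $0,\,-1,\,-\frac{3+\sqrt 5}{2},\,-\frac{3-\sqrt 5}{2}$ --- so your worry about a defective eigenvalue is moot --- and extract $-2+\varphi^{2n}+\varphi^{-2n}$ from $\widetilde Z P (-D)^{n-1} P^{-1}\widetilde{C_1}$, just as you propose. One small sign correction: $s_n = C_{n,12}(-1)+C_{n,21}(-1)$ alternates in sign ($s_1=1$, $s_2=-5$, $s_3=16$), so the scalar recurrence for $s_n$ itself has characteristic polynomial $(x+1)(x^2+3x+1)$, not your guessed $(x-1)(x^2-3x+1)$; the latter governs $(-1)^{n-1}s_n=\det(W(3,n))$ (equivalently, the sign is absorbed by the prefactor $(-1)^{-n-1}$ in \eqref{eq:newJonesW3n}), which the absolute-value hedge at the end of your proposal already accommodates.
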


\begin{proof}
	Let $\widetilde Z, \widetilde M$, and $\widetilde{C_1}$ be the matrices obtained from the matrices $Z(-1)$, $M(-1)$, and $C_1(-1)$ by deleting their third column, third row and third column, and third row, respectively.
	Thus, $C_{_{-},2}(t)$ is eliminated and we have
	\begin{align*}
		\widetilde Z & = \begin{bmatrix}
			0 & 0 & 1 & 1
		\end{bmatrix},&
		\widetilde M & = \begin{bmatrix}
			0 & -2 & 0 & -1\\
			2 & -4 & 0 & 0\\
			1 & -2 & 0 & 0\\
			0 & 0 & -1 & 0
		\end{bmatrix},&
		\widetilde{C_1} & = \begin{bmatrix}
			0\\
			2\\
			1\\
			0
		\end{bmatrix}.
	\end{align*}
	The characteristic polynomial of the matrix $\widetilde M$ is $f_{\widetilde M}(x) = x^4 + 4x^3 + 4x^2 + x$.
	The eigenvalues of $\widetilde M$ are $0, -1, -\frac{3+\sqrt{5}}{2}, -\frac{3-\sqrt{5}}{2}$.
	Clearly, $\widetilde M$ is diagonalizable over $\R$ and $\widetilde M=PDP^{-1}$, where
	\begin{align*}
		P & = \begin{bmatrix}
			3 & 2 & \frac{5 + \sqrt{5}}{2} & \frac{5 - \sqrt{5}}{2}\\
			2 & 1 & 3 + \sqrt{5} & 3 - \sqrt{5}\\
			1 & 0 & \frac{3 + \sqrt{5}}{2} & \frac{3 - \sqrt{5}}{2}\\
			1 & 2 & 1 & 1
		\end{bmatrix},&
		D & = \begin{bmatrix}
			-1 & 0 & 0 & 0\\
			0 & 0 & 0 & 0\\
			0 & 0 & -\left( \frac{3 + \sqrt{5}}{2} \right) & 0\\
			0 & 0 & 0 & -\left( \frac{3 - \sqrt{5}}{2} \right)
		\end{bmatrix},
	\end{align*}
	\begin{align*}
		P^{-1} & = \begin{bmatrix}
			1 & 0 & -1 & -1\\
			0 & 1 & -2 & 0\\
			\frac{\sqrt 5 - 5}{10} & \frac{3}{\sqrt 5} - 1 & \frac{5}{2} - \frac{11}{2\sqrt 5} & 1 - \frac{2}{\sqrt 5}\\
			\frac{-\sqrt 5 - 5}{10} & -\frac{3}{\sqrt 5} - 1 & \frac{5}{2} + \frac{11}{2\sqrt 5} & 1+\frac{2}{\sqrt 5}
		\end{bmatrix}.
	\end{align*}
	After substituting $t=-1$ in~\eqref{eq:newJonesW3n}, we obtain
	\begin{align*}
		\det (W(3,n)) & = \left\vert \widetilde Z \left((-1)^{-n-1} \widetilde M^{n-1}\right) \widetilde{C_1} \right\vert\\
		& = \left\vert \widetilde Z \left(-\widetilde M\right)^{n-1} \widetilde{C_1} \right\vert\\
		& = \left\vert \left(\widetilde Z P\right) \left(-D\right)^{n-1} \left(P^{-1} \widetilde{C_1} \right) \right\vert\\
		& = \begin{bmatrix}
			2 & 2 & \frac{5 + \sqrt{5}}{2} & \frac{5 - \sqrt{5}}{2}\\
		\end{bmatrix}
		\begin{bmatrix}
			1 & 0 & 0 & 0\\
			0 & 0 & 0 & 0\\
			0 & 0 & \frac{3 + \sqrt{5}}{2} & 0\\
			0 & 0 & 0 & \frac{3 - \sqrt{5}}{2}
		\end{bmatrix}^{n-1}
		\begin{bmatrix}
			-1\\
			0\\
			\frac{5 + \sqrt{5}}{10}\\
			\frac{5 - \sqrt{5}}{10}
		\end{bmatrix}\\
		& = -2 + \left(\frac{3+\sqrt{5}}{2}\right)^n + \left(\frac{3-\sqrt{5}}{2}\right)^n.
	\end{align*}
	This completes the proof.
\end{proof}

\section{Some invariants of the weaving knot $W(p,2)$}

\subsection{A recursive formula for the Jones polynomial of $W(p,2)$}

A closed-form formula for the Jones polynomial of $W(p,2)$ is not known.
Nevertheless, a recursive formula for the same can be given as follows:

\begin{thm}\label{JonesWp2}
	For the weaving knot $W(p,2)$ where $p\geq 2$, the Jones polynomial is recursively defined by the equations:
	\begin{align*}
		V_{W(2,2)}(t) & = -(t^{\frac{5}{2}} + t^{\frac{1}{2}}),\\
		V_{W(3,2)}(t) & = t^{-2} - t^{-1}z V_{W(2,2)}(t),\quad\text{where } z=t^{\frac{1}{2}}-t^{-\frac{1}{2}},
	\end{align*}
	and for any integer $n\geq 2$,
	\begin{align*}
		V_{W(2n,2)}(t) & = t^2 V_{W(2n-2,2)}(t) + tz V_{W(2n-1,2)}(t),\\
		V_{W(2n+1,2)}(t) & = t^{-2} V_{W(2n-1,2)}(t) - t^{-1}z V_{W(2n,2)}(t).
	\end{align*}
\end{thm}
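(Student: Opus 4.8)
The plan is to induct on the number of strands $p$, obtaining $V_{W(p+1,2)}$ from $V_{W(p,2)}$ and $V_{W(p-1,2)}$ by applying the oriented Jones skein relation $t^{-1}V_{L_+}-t\,V_{L_-}=z\,V_{L_0}$ (with $z=t^{1/2}-t^{-1/2}$ as in the statement) at the two crossings that are created when a strand is added. Write $w_p=\s_1\s_2^{-1}\cdots\s_{p-1}^{(-1)^p}$ for the fundamental word, so that $B_W(p,2)=w_p^2$ and $w_{p+1}=w_p\,\s_p^{(-1)^{p+1}}$; thus $W(p+1,2)=\mathbf{cl}\big(w_p\,\s_p^{\varepsilon}\,w_p\,\s_p^{\varepsilon}\big)$, where $\varepsilon=(-1)^{p+1}$ equals $+1$ exactly when the target $W(p+1,2)$ has an even number of strands. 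Since every strand in a braid closure is coherently oriented, the two occurrences of $\s_p^{\varepsilon}$ are genuine positive (resp.\ negative) crossings in the sense of the oriented skein, so the skein relation applies cleanly at the last of them.

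Applying the skein relation at the final $\s_p^{\varepsilon}$ crossing of $\mathbf{cl}(w_p\s_p^{\varepsilon}w_p\s_p^{\varepsilon})$ produces two simpler diagrams that I claim are weaving knots of smaller type. First, the \emph{smoothed} diagram $\mathbf{cl}(w_p\s_p^{\varepsilon}w_p)$ contains $\s_p$ exactly once; cyclically permuting the closure gives $\mathbf{cl}(w_p^2\,\s_p^{\varepsilon})$, and a single Markov destabilization removes strand $p+1$, so this diagram is $W(p,2)$. Second, the \emph{crossing-switched} diagram $\mathbf{cl}(w_p\s_p^{\varepsilon}w_p\s_p^{-\varepsilon})$ must be identified with $W(p-1,2)$: writing $w_p=w_{p-1}\s_{p-1}^{(-1)^p}$ with $w_{p-1}\in B_{p-1}$ and commuting the central $\s_p^{\varepsilon}$ past $w_{p-1}$, one is left with a factor $\s_p^{\varepsilon}\s_{p-1}^{(-1)^p}\s_p^{-\varepsilon}$, which the braid relation \textrm{(B2)} rewrites as a word in which $\s_p$ appears only once; a cyclic permutation then places this lone $\s_p$ at the end and a destabilization removes strand $p+1$, and a second cyclic-permutation-plus-destabilization removes strand $p$, leaving $\mathbf{cl}(w_{p-1}^2)=W(p-1,2)$.

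With these two identifications in hand, solving the skein relation for the value at $W(p+1,2)$ completes the induction. When $\varepsilon=+1$ (even number of strands $m=p+1$) the knot is $L_+$, giving $V_{W(m,2)}=t^2V_{W(m-2,2)}+tz\,V_{W(m-1,2)}$; when $\varepsilon=-1$ (odd $m$) the knot is $L_-$, giving $V_{W(m,2)}=t^{-2}V_{W(m-2,2)}-t^{-1}z\,V_{W(m-1,2)}$, which are exactly the two stated recursions. For the base cases, $W(2,2)=\mathbf{cl}(\s_1^2)$ is the positive Hopf link with $V=-(t^{5/2}+t^{1/2})$, and the listed formula for $V_{W(3,2)}$ is precisely the odd-strand recursion once one notes that the smoothed term there is $W(1,2)$, the unknot, with $V=1$. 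The main obstacle is the second reduction: one must verify that the braid-relation step genuinely leaves a single $\s_p$ for both sign choices $\varepsilon=\pm1$ (equivalently both parities of $p$), and that the two destabilizations are legitimate Markov moves; because destabilization preserves the link type, it contributes no monomial factor, which is why the coefficients in the recursion are dictated entirely by the skein relation. A secondary point requiring care is the orientation bookkeeping that certifies which of $W(p+1,2)$ and $W(p-1,2)$ plays the role of $L_+$ versus $L_-$ in each parity.
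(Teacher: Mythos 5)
Your proof is correct and follows essentially the same route as the paper: the paper's proof consists of applying the Jones skein relation over the skein tree diagram of $W(2n+1,2)$ (Figure~2), which encodes exactly the identifications you establish --- smoothing the last crossing of $\mathbf{cl}\big(w_p\,\s_p^{\varepsilon}\,w_p\,\s_p^{\varepsilon}\big)$ gives $W(p,2)$ and switching it gives $W(p-1,2)$, with the two parities of $\varepsilon$ yielding the even and odd recursions. The only difference is that the paper asserts these identifications pictorially via the figure, while you verify them by explicit braid-relation and Markov-move computations, so your write-up is a more detailed rendering of the same argument rather than a different one.
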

\begin{proof}
	The proof follows directly by applying the skein relation satisfied by the Jones polynomial (see~\cite{Mur}) over the skein tree diagram of $W(2n+1,2)$ shown in Figure \ref{skeintree}.
\end{proof}

\begin{figure}[h!]
	\centerline{\includegraphics[scale=0.75]{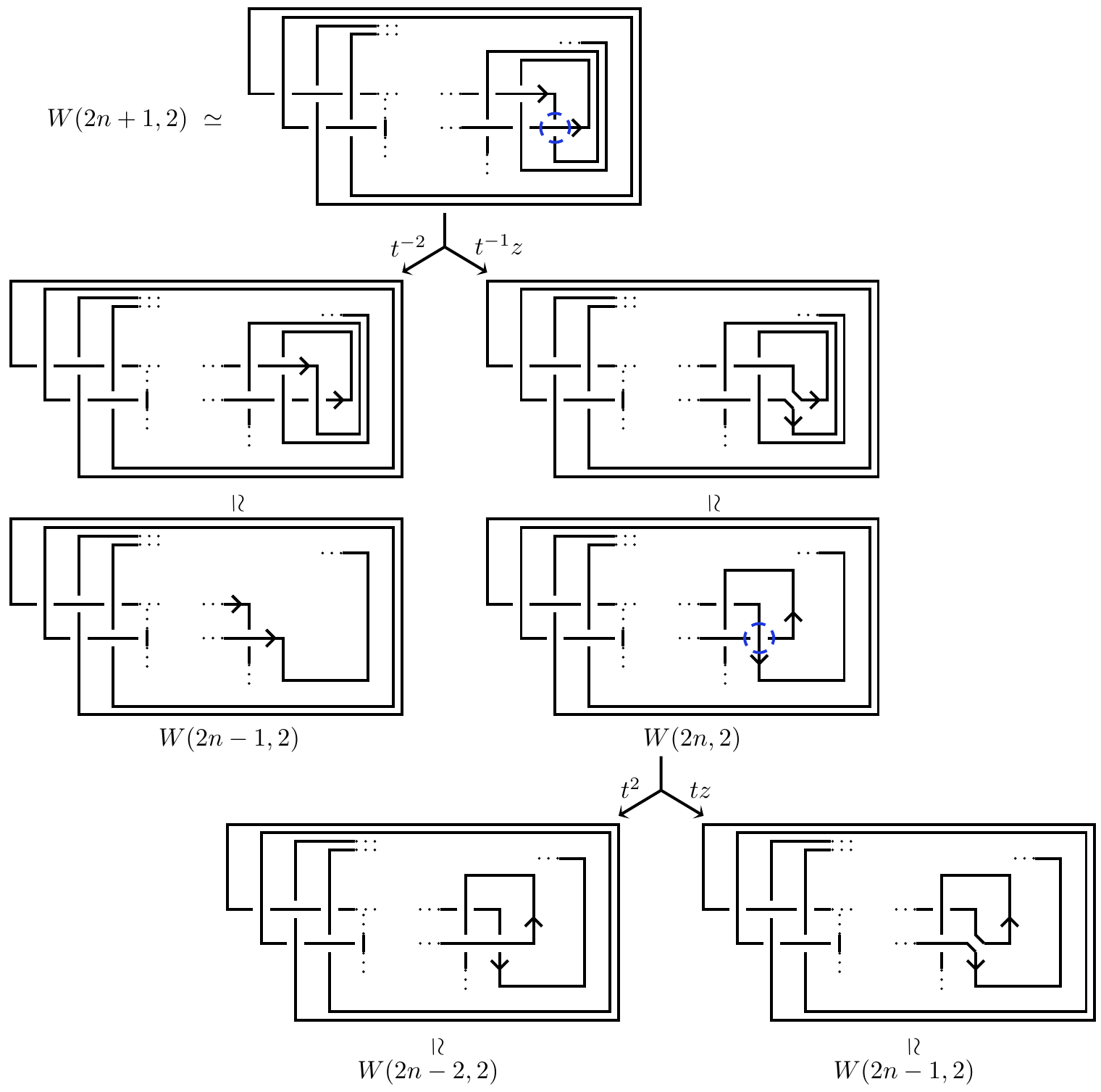}}
	\caption{A skein tree diagram of $W(2n+1,2)$.}\label{skeintree}
\end{figure}

In the sequel, we give some applications of Theorem~\ref{JonesWp2}.
\subsection{The dimension of the vector space $H_1(D_{W(p,2)};\Z_3)$ over $\Z_3$}
\begin{thm}\label{eJonesWp2}
	Let $v_{2n} = V_{W(2n,2)}(e^{i\pi/3})$ and $v_{2n+1} = V_{W(2n+1,2)}(e^{i\pi/3})$.
	Then
	\begin{align}
		v_{2n} & = \begin{cases}
			(-1)^k\,i, &\text{if } n=2k-1,~\text{where }k=1,2,3,\ldots,\\
			(-1)^{k+1}\,\sqrt{3}, &\text{if } n=2k,~\text{where }k=1,2,3,\ldots.
		\end{cases} \label{eq:1eJonesWp2}\\
		v_{2n+1} & = \begin{cases}
			-1, &\text{if } n\equiv 1,2\pmod 4,\\
			1, &\text{if } n\equiv 0,3\pmod 4.
		\end{cases} \label{eq:2eJonesWp2}
	\end{align}
\end{thm}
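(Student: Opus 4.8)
The plan is to evaluate the coefficients appearing in Theorem~\ref{JonesWp2} at $t=w:=e^{i\pi/3}$, repackage the two coupled recursions as a single matrix recursion for the pair $(v_{2n},v_{2n+1})$, and then exploit a collapse of the resulting transfer matrix. First I would record the values of the coefficients at $w$: since $w^{1/2}=e^{i\pi/6}$ and $w^{-1/2}=e^{-i\pi/6}$, the quantity $z=w^{1/2}-w^{-1/2}$ equals $i$, so $z^2=-1$; moreover $w^2=e^{2i\pi/3}$, $w^{-2}=e^{-2i\pi/3}$, $wz=e^{5i\pi/6}$, and $w^{-1}z=e^{i\pi/6}$. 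Feeding these into the two base formulas of Theorem~\ref{JonesWp2} gives the seeds $v_2=-(w^{5/2}+w^{1/2})=-i$ and $v_3=w^{-2}-w^{-1}z\,v_2=-1$, both obtained by a direct evaluation of the relevant sixth roots of unity.

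Next I would set $V_n=\begin{bmatrix} v_{2n} & v_{2n+1}\end{bmatrix}^T$ and eliminate $v_{2n}$ from the odd recursion by means of the even one. This turns the two recursions of Theorem~\ref{JonesWp2}, evaluated at $w$, into the single relation $V_n=T\,V_{n-1}$ for $n\geq 2$, where
\[
	T=\begin{bmatrix} w^2 & wz \\ -wz & w^{-2}-z^2 \end{bmatrix},
\]
so that $V_n=T^{\,n-1}V_1$ with $V_1=\begin{bmatrix} -i & -1\end{bmatrix}^T$. A one-line computation gives $\det T=w^2(w^{-2}-z^2)+(wz)^2=1$ and, using $w^2+w^{-2}=2\cos\tfrac{2\pi}{3}=-1$ together with $z^2=-1$, also $\operatorname{tr} T=w^2+w^{-2}-z^2=0$. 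By the Cayley--Hamilton theorem, $T$ annihilates its characteristic polynomial $x^2+1$, i.e.\ $T^2=-I$.

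The identity $T^2=-I$ is the crux of the argument: it forces $V_{n+2}=-V_n$ and hence $V_{n+4}=V_n$, so the sequence of pairs is $4$-periodic and runs through $V_1=\begin{bmatrix}-i & -1\end{bmatrix}^T$, $V_2=TV_1=\begin{bmatrix}\sqrt3 & -1\end{bmatrix}^T$, $-V_1$, and $-V_2$ as $n\equiv 1,2,3,0\pmod 4$. Reading off the first coordinate against $n\bmod 4$ yields $v_{2n}=-i,\sqrt3,i,-\sqrt3$ respectively; splitting according to the parity of $n$ (writing $n=2k-1$ or $n=2k$) rewrites this exactly as~\eqref{eq:1eJonesWp2}. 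Reading off the second coordinate gives $v_{2n+1}=-1,-1,1,1$ for $n\equiv 1,2,3,0\pmod 4$, which is precisely the case split in~\eqref{eq:2eJonesWp2}.

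The main obstacle is purely bookkeeping rather than conceptual: one must keep the principal branch of the half-integer powers $w^{\pm 1/2}$ consistent so that the seeds $v_2,v_3$ land on the stated values, and then correctly translate the clean period-$4$ description of $V_n$ into the parity-split form of~\eqref{eq:1eJonesWp2}. Once the trace of $T$ is seen to vanish at $w$ --- which happens exactly because $w^2+w^{-2}=-1=z^2$ there --- the collapse $T^2=-I$ is automatic and the remaining periodicity and casework are immediate, so no genuine induction beyond iterating $T$ is required.
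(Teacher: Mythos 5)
Your proposal is correct, and it reaches the theorem by a genuinely different mechanism than the paper. The paper also substitutes $t=w$, $z=i$ into Theorem~\ref{JonesWp2}, but it then unwinds the two coupled recursions by hand over two steps, using identities such as $w^{-2}-w^{-1}=-1$ and $w+w^{-1}=1$ to obtain the antiperiodicity relations $v_{2n+1}=-v_{2n-3}$ and $v_{2n}=-v_{2n-4}$ directly, and finishes exactly as you do from the seeds $v_2=-i$, $v_3=-1$, $v_4=\sqrt3$, $v_5=-1$. Your transfer-matrix route derives the same pair of relations structurally: eliminating $v_{2n}$ from the odd recursion indeed gives $V_n=TV_{n-1}$ with $T=\left[\begin{smallmatrix} w^2 & wz\\ -wz & w^{-2}-z^2\end{smallmatrix}\right]$, and your computations check out --- $\det T=1$ holds identically in $t$, while $\operatorname{tr}T=w^2+w^{-2}-z^2$ vanishes precisely at $t=w$ because $w^2+w^{-2}=-1=z^2$ there, so Cayley--Hamilton yields $T^2=-I$, i.e.\ $V_{n+2}=-V_n$, which is exactly the paper's two relations packaged as one. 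Your seeds and orbit are also right: $V_1=\begin{bmatrix}-i & -1\end{bmatrix}^T$, $V_2=TV_1=\begin{bmatrix}\sqrt3 & -1\end{bmatrix}^T$, and the translation of the period-$4$ orbit into the parity-split form matches \eqref{eq:1eJonesWp2} and \eqref{eq:2eJonesWp2}. What your approach buys is an explanation rather than a verification: the period-$4$ sign pattern is seen to occur exactly because the trace of the transfer matrix vanishes at $e^{i\pi/3}$, and a single $2\times2$ computation replaces the paper's two separate two-step eliminations (whose displayed derivation of $v_{2n}=-v_{2n-4}$ even carries a small index typo, $v_{2n-2}$ where $v_{2n-3}$ is meant --- bookkeeping your formulation sidesteps); moreover the same method would evaluate $V_{W(p,2)}$ at any other root of unity where $\operatorname{tr}T$ takes a tractable value. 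The paper's hands-on computation is more elementary and self-contained, but it offers no such structural insight.
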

\begin{proof}
	Substitute $t=w=e^{i\pi/3}$ in Theorem~\ref{JonesWp2}.
	Then $z=i$ and for $n\geq 3$,
	\begin{align*}
		v_{2n+1} & = w^{-2} v_{2n-1} - iw^{-1} v_{2n}\\
		& = w^{-2} (w^{-2}v_{2n-3} - iw^{-1}v_{2n-2}) - iw^{-1} (w^2 v_{2n-2} + iw\,v_{2n-1})\\
		& = w^{-4}v_{2n-3} - (iw^{-3}+iw)v_{2n-2} - i^2 (w^{-2}v_{2n-3} - iw^{-1}v_{2n-2})\\
		& = (w^{-4}-i^2w^{-2}) v_{2n-3} + (i^3w^{-1}-iw-iw^{-3}) v_{2n-2}\\
		& = (w^{-2}-w^{-1}) v_{2n-3} + (-iw^{-1}-iw+i)v_{2n-2}\\
		& = -v_{2n-3}.
	\end{align*}
	Similarly for $n\geq 3$,
	\begin{align*}
		v_{2n} & = w^2 (w^2 v_{2n-4} + iw\,v_{2n-3}) + iw(w^{-2}v_{2n-3} - iw^{-1}v_{2n-2})\\
		& = w^4\,v_{2n-4} + (iw^3+iw^{-1})v_{2n-3} - i^2 (w^2 v_{2n-4} + iw\,v_{2n-3})\\
		& = (w^4-i^2w^2) v_{2n-4} + (-i^3w+iw^{-1}+iw^3) v_{2n-3}\\
		& = (w^2-w) v_{2n-4} + (iw+iw^{-1}-i)v_{2n-2}\\
		& = -v_{2n-4}.
	\end{align*}
	Since $v_2 = -i, v_3 = -1, v_4 = \sqrt{3}$, and $v_5 = -1$, we obtain~\eqref{eq:1eJonesWp2}~and~\eqref{eq:2eJonesWp2}.
\end{proof}

\begin{cor}\label{nWp2}
	For the weaving knot $W(p,2)$, the dimension $n_{W(p,2)}$ of the vector space $H_1(D_{W(p,2)};\Z_3)$ is given by
	\begin{equation*}
		n_{W(p,2)}  = \begin{cases}
			1, & \text{if } p=4k,\\
			0, & \text{otherwise}.
		\end{cases}
	\end{equation*}
\end{cor}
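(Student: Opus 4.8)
The plan is to deduce the dimension $n_{W(p,2)}$ directly from the evaluations computed in Theorem~\ref{eJonesWp2} by invoking the Lickorish--Millett formula of Theorem~\ref{LickMill}, exactly as was done for $W(3,n)$ in Corollary~\ref{nW3n}. The key observation is that, since $|i| = 1$, taking absolute values in Theorem~\ref{LickMill} gives
\begin{equation*}
	\left| V_L(e^{i\pi/3}) \right| = \left( \sqrt{3} \right)^{n_L} = 3^{n_L/2},
\end{equation*}
so that the number of components $\mu(L)$ and the ambiguous sign play no role: the magnitude of the evaluation alone determines $n_L$. Thus it suffices to read off $\left| V_{W(p,2)}(e^{i\pi/3}) \right|$ from Theorem~\ref{eJonesWp2} and match the exponent of $3$.

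I would then split according to the parity of $p$ and track the residue class of $p$ modulo $4$. First, when $p = 2n+1$ is odd, equation~\eqref{eq:2eJonesWp2} gives $v_{2n+1} = \pm 1$, whence $\left| V_{W(p,2)}(e^{i\pi/3}) \right| = 1 = 3^0$ and so $n_{W(p,2)} = 0$; note that an odd $p$ is never of the form $4k$. Next, when $p = 2n$ is even, I would use~\eqref{eq:1eJonesWp2} after recording how the parameter $n$ relates to $p \bmod 4$. If $n = 2k-1$ is odd, then $p = 4k-2$ and $v_{2n} = (-1)^k\,i$, so $\left| V_{W(p,2)}(e^{i\pi/3}) \right| = 1$ and $n_{W(p,2)} = 0$. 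If $n = 2k$ is even, then $p = 4k$ and $v_{2n} = (-1)^{k+1}\sqrt{3}$, so $\left| V_{W(p,2)}(e^{i\pi/3}) \right| = \sqrt{3} = 3^{1/2}$, forcing $n_{W(p,2)} = 1$.

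Collecting these cases yields $n_{W(p,2)} = 1$ exactly when $p = 4k$ and $n_{W(p,2)} = 0$ otherwise, which is the assertion. I do not anticipate a genuine obstacle here, as the argument is a direct application of the two cited theorems; the only point requiring care is the bookkeeping that translates the index $n$ and the parity parameter $k$ appearing in Theorem~\ref{eJonesWp2} into the single congruence condition $p \equiv 0 \pmod 4$, verifying in particular that the residue classes $p \equiv 1, 3 \pmod 4$ (odd $p$) and $p \equiv 2 \pmod 4$ all fall into the $n_{W(p,2)} = 0$ branch.
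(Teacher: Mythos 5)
Your proposal is correct and follows exactly the paper's route: the paper's proof of Corollary~\ref{nWp2} is simply the observation that the result ``follows directly from Theorem~\ref{eJonesWp2} and Theorem~\ref{LickMill}.'' Your write-up merely makes explicit the bookkeeping (taking absolute values so that the sign and $i^{\mu(L)-1}$ factor drop out, and translating the parameters $n$ and $k$ into the congruence class of $p$ modulo $4$) that the paper leaves implicit.
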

\begin{proof}
	The proof follows directly from Theorem~\ref{eJonesWp2} and Theorem~\ref{LickMill}.
\end{proof}

From Corollary~\ref{nWp2}, it is obvious that the value of $n_{W(p,2)}$ conveys nothing significant about the unknotting number of $W(p,2)$.

It is well-known that the unknotting number of a knot is less than or equal to half of its crossing number.
Thus $u(W(2n+1,2)) \leq \frac{c(W(2n+1,2))}{2} = 2n$.
Whereas lower bounds of unknotting numbers matter the most, it will also be interesting to search for a better upper bound.
Here we show that $u(W(2n+1,2)) \leq n$.
It has been observed in Figure~\ref{skeintree} that $W(2n-1,2)$ is obtained from $W(2n+1,2)$ by one crossing change.
This gives an unknotting sequence $W(2n+1,2) \rightarrow W(2n-1,2) \rightarrow W(2n-3,2) \rightarrow\cdots\rightarrow W(3,2) \rightarrow 0_1$ which transforms $W(2n+1,2)$ into the unknot $0_1$ by changing $n$ crossings.

The efficacy of this upper bound is not known to the authors.
It is known that $W(7,2) = 12a477$ and $u(12a477) = 2$ or $3$ which is anyway $\leq 3$.
In this regard, we pose the question: Find an integer $n$ such that $u(W(2n+1,2)) < n$.

\subsection{A formula for knot determinant of $W(p,2)$}

Here we shall use Theorem~\ref{JonesWp2} to derive a formula for the determinant of weaving knot $W(p,2)$.

\begin{thm}\label{detWp2}
	For any integer $p\geq 2$, the knot determinant of the weaving knot $W(p,2)$ is given by
	\begin{equation}\label{eq:detWp2}
		\det (W(p,2)) = \begin{cases}
			\begin{split}
				\frac{1}{2\sqrt{2}}
				\left[ \left(3 + 2\sqrt{2}\right)^n - \left(3 - 2\sqrt{2}\right)^n \right],
			\end{split}
			& \text{if } p=2n,\vspace{5pt}\\
			\begin{split}
				\frac{1}{4}
				& \left[ \left(2+\sqrt{2}\right) \left(3 + 2\sqrt{2}\right)^n + \right.\\
				& \qquad\quad \left. \left(2-\sqrt{2}\right) \left(3 - 2\sqrt{2}\right)^n \right],
			\end{split}
			& \text{if } p=2n+1.\\
		\end{cases}
	\end{equation}
\end{thm}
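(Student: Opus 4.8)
The plan is to evaluate the Jones polynomial at $t=-1$ using the recursion in Theorem~\ref{JonesWp2}, exactly mirroring the strategy already used in Theorem~\ref{detW3n}. First I would substitute $t=-1$, so that $z = t^{\frac{1}{2}} - t^{-\frac{1}{2}} = i - (-i) = 2i$ (choosing $t^{1/2}=i$), and note that $t^{\pm1} = -1$ while $t^{\pm2}=1$. Writing $d_p = V_{W(p,2)}(-1)$, the four recursive equations become, for $n\geq 2$,
\begin{align*}
	d_{2n} & = d_{2n-2} + (-1)(2i)\,d_{2n-1} = d_{2n-2} - 2i\,d_{2n-1},\\
	d_{2n+1} & = d_{2n-1} - (-1)(2i)\,d_{2n} = d_{2n-1} + 2i\,d_{2n},
\end{align*}
with base values $d_2 = V_{W(2,2)}(-1) = -(i^5 + i) = -2i$ and $d_3 = 1 - (-1)(2i)(-2i) = 1 + 2i\cdot(-2i)\cdot? $; rather than track signs loosely here, the first real step is to pin down $t^{1/2}$ consistently and compute $d_2, d_3$ exactly, then take absolute values only at the very end.

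Next I would decouple the two interleaved recursions into a single second-order recursion on each parity class, or equivalently assemble a $2\times 2$ transfer matrix $T$ sending $(d_{2n-1}, d_{2n})^T$ to $(d_{2n+1}, d_{2n+2})^T$. Composing the two substitutions above, one finds $T = \begin{bmatrix} 1+? \end{bmatrix}$ whose entries are linear in $i$; I would then compute the characteristic polynomial of $T$ and observe that its eigenvalues are $3 \pm 2\sqrt{2}$ (note $(3+2\sqrt2)(3-2\sqrt2)=1$, consistent with the reciprocal pair appearing in the target formula). The appearance of $\sqrt 2$ rather than $i$ is the crucial simplification: although the intermediate quantities $d_p$ are complex because of the half-integer powers of $t$, the determinant $\det(W(p,2)) = |d_p|$ is real, and squaring or combining the two parity sequences must cause the imaginary units to collapse into the real quadratic irrationality $3\pm2\sqrt2$.

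The cleanest route, and the one I would actually write up, is to diagonalize $T$ as $T = Q\,\mathrm{diag}(3+2\sqrt2,\,3-2\sqrt2)\,Q^{-1}$, express the initial data $(d_1,d_2)$ in the eigenbasis, and read off closed forms $d_{2n} = \alpha(3+2\sqrt2)^n + \beta(3-2\sqrt2)^n$ and $d_{2n+1} = \gamma(3+2\sqrt2)^n + \delta(3-2\sqrt2)^n$ for suitable constants $\alpha,\beta,\gamma,\delta \in \Q(\sqrt2, i)$. Matching against the two base cases determines these constants; plugging into $|d_p|$ and simplifying the coefficients $\tfrac{1}{2\sqrt2}$ (even case) and $\tfrac{2\pm\sqrt2}{4}$ (odd case) should reproduce~\eqref{eq:detWp2}.

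The main obstacle I anticipate is bookkeeping rather than conceptual: keeping the branch of $t^{1/2}$ fixed throughout so that the $z$-terms carry consistent signs, and then verifying that the modulus $|d_p|$ really strips away all factors of $i$ to leave the stated real expression. A convenient sanity check against small cases—$\det(W(2,2)) = \det(4_1) = 5$ (with $p=2n$, $n=1$ giving $\tfrac{1}{2\sqrt2}[(3+2\sqrt2)-(3-2\sqrt2)] = \tfrac{4\sqrt2}{2\sqrt2}=2$, which flags that $W(2,2)$ is the Hopf link of determinant $2$, not $4_1$) and $\det(W(3,2)) = \det(4_1) = 5$ (from the $p=2n+1$, $n=1$ branch)—would let me catch any sign or normalization error before committing to the general induction.
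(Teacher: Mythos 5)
Your proposal is correct and follows essentially the same route as the paper: the paper likewise sets $t=-1$ (so $z=2i$), writes $a_n = V_{W(2n,2)}(-1)$, $b_n = V_{W(2n+1,2)}(-1)$ with the coupled recursions $a_n = a_{n-1} - 2i\,b_{n-1}$, $b_n = b_{n-1} + 2i\,a_n$, eliminates one sequence to get the decoupled second-order recurrences $a_{n+1} = 6a_n - a_{n-1}$ and $b_n = 6b_{n-1} - b_{n-2}$ with characteristic roots $3 \pm 2\sqrt{2}$, solves them against the initial data $a_1 = -2i$, $a_2 = -12i$, $b_1 = 5$, $b_2 = 29$ (here $a_n$ is purely imaginary and $b_n$ purely real, which is how the factors of $i$ disappear under the modulus), and takes absolute values. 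Your transfer-matrix diagonalization is just a repackaging of that same elimination (your $2\times 2$ matrix has trace $6$ and determinant $1$, hence the same characteristic polynomial $x^2 - 6x + 1$), and your self-corrected sanity check correctly pins $W(2,2)$ as the Hopf link with determinant $2$, matching the even branch at $n=1$.
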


\begin{proof}
	Substitute $t=-1$ in Theorem~\ref{JonesWp2}.
	Put $a_{n} = V_{W(2n,2)}(-1)$ and $b_{n} = V_{W(2n+1,2)}(-1)$ for $n=1,2,3,\ldots$.
	Then
	\begin{align*}
		& z = 2i,\\
		& a_1 = -2i, \quad a_2 = -12i, \quad b_1 = 5, \quad b_2 = 29,\\
		& a_{n} = a_{n-1} - 2i b_{n-1}, \quad b_{n} = b_{n-1} + 2i a_{n} \qquad (n\geq 3).
	\end{align*}
	This implies
	\begin{align*}
		\frac{a_{n+1} - a_{n}}{-2i} = \frac{a_{n} - a_{n-1}}{-2i} + 2i a_{n}
		& \quad \Rightarrow \quad a_{n+1} = 6a_{n} - a_{n-1} \quad (n\geq 2),\\
		\frac{b_{n} - b_{n-1}}{2i} = \frac{b_{n-1} - b_{n-2}}{2i} - 2i b_{n-1}
		& \quad \Rightarrow \quad b_{n}  = 6b_{n-1} - b_{n-2} \quad (n\geq 3).
	\end{align*}
	The characteristic equation of both the linear recurrence relations is $x^2-6x+1=0$.
	Its roots are $3+2\sqrt{2}$ and $3-2\sqrt{2}$.
	By solving these recurrence relations for the given initial conditions, we obtain 
	\begin{align*}
		a_n & = -\frac{i}{2\sqrt{2}} \left[ \left(3 + 2\sqrt{2}\right)^n - \left(3 - 2\sqrt{2}\right)^n \right]
		& (n = 1,2,3,\ldots),\\
		b_n & = \frac{1}{4} \left[ \left(2+\sqrt{2}\right) \left(3 + 2\sqrt{2}\right)^n +
		\left(2-\sqrt{2}\right) \left(3 - 2\sqrt{2}\right)^n \right]
		& (n = 1,2,3,\ldots).
	\end{align*}
	Since $\det (W(2n,2)) = \vert a_n \vert$ and $\det (W(2n+1,2)) = \vert b_n \vert$, we obtain~\eqref{eq:detWp2}.
\end{proof}

\section*{Appendix}
Using the formulae given by \eqref{eq:detWp2}, \eqref{eq:detW3n}, \eqref{eq:1eJonesWp2}, \eqref{eq:2eJonesWp2}, and \eqref{eq:eJonesW3n}, we provide a list containing some weaving knots with their respective knot determinants and values of Jones polynomial at $t=w=e^{i\pi/3}$ in Table~\ref{tab1}.
Few of these values can be verified from online databases~(see Livingston and Moore~\cite[KnotInfo-LinkInfo]{LivMoo}).
In Table~\ref{tab2}, we provide another list containing some weaving knots with their respective Jones polynomials calculated using Theorem~\ref{JonesWp2}.

\begin{table}[h!]
	\centering
	\caption{The determinant, $\det(K)$, and the value of Jones polynomial at $t=e^{i\pi/3}$, $V_K(w)$, of the knot $K=W(p,q)$ for certain values of $p$ and $q$.}\label{tab1}
	\begin{subtable}{0.45\textwidth}
		\centering
		\begin{tabular}{lrr}
			\toprule
			$K$ & $\det(K)$ & $V_K(w)$\\
			\midrule
			$W(2,2) = 2_1^2$  & $2$      & $-i$\\
			$W(3,2) = 4_1$    & $5$      & $-1$\\
			$W(4,2) = 6_3^2$  & $12$     & $\sqrt{3}$\\
			$W(5,2) = 8_{12}$ & $29$     & $-1$\\
			$W(6,2)$          & $70$     & $i$\\
			$W(7,2)$          & $169$    & $1$\\
			$W(8,2)$          & $408$    & $-\sqrt{3}$\\
			$W(9,2)$          & $985$    & $1$\\
			$W(10,2)$         & $2378$   & $-i$\\
			$W(11,2)$         & $5741$   & $-1$\\
			$W(12,2)$         & $13860$  & $\sqrt{3}$\\
			$W(13,2)$         & $33461$  & $-1$\\
			$W(14,2)$         & $80782$  & $i$\\
			$W(15,2)$         & $195025$ & $1$\\	
			\bottomrule
		\end{tabular}
	\end{subtable}
	\,
	\begin{subtable}{0.475\textwidth}
		\centering
		\begin{tabular}{lrr}
			\toprule
			$K$ & $\det(K)$ & $V_K(w)$\\
			\midrule
			$W(3,2) = 4_1$      & $5$       & $-1$\\
			$W(3,3) = 6_2^3$    & $16$      & $1$\\
			$W(3,4) = 8_{18}$   & $45$      & $3$\\
			$W(3,5) = 10_{123}$ & $121$     & $1$\\
			$W(3,6)$            & $320$     & $-1$\\
			$W(3,7)$            & $841$     & $1$\\
			$W(3,8)$            & $2205$    & $3$\\
			$W(3,9)$            & $5776$    & $1$\\
			$W(3,10)$           & $15125$   & $-1$\\
			$W(3,11)$           & $39601$   & $1$\\
			$W(3,12)$           & $103680$  & $3$\\
			$W(3,13)$           & $271441$  & $1$\\
			$W(3,14)$           & $710645$  & $-1$\\
			$W(3,15)$           & $1860496$ & $1$\\	
			\bottomrule
		\end{tabular}
	\end{subtable}
\end{table}

\renewcommand{\arraystretch}{1.3}
\begin{longtable}{lp{9cm}}
	\caption{The Jones polynomial of the knot $K=W(p,2)$ for certain values of $p$.}\label{tab2}\\
	\toprule
	$K$ & $V_K(t)$\\
	\midrule
	$W(2,2) = 2_1^2$	& $-t^{\frac{1}{2}} - t^{\frac{5}{2}}$\\
	$W(3,2) = 4_1$		& $t^{-2} - t^{-1} + 1 - t + t^2$\\
	$W(4,2) = 6_3^2$	& $-t^{-\frac{3}{2}} + 2t^{-\frac{1}{2}} - 2t^{\frac{1}{2}} + 2t^{\frac{3}{2}}
						- 3t^{\frac{5}{2}} + t^{\frac{7}{2}} - t^{\frac{9}{2}}$\\
	$W(5,2) = 8_{12}$	& $t^{-4} - 2t^{-3} + 4t^{-2} - 5t^{-1} + 5 - 5t + 4t^2 - 2t^3 + t^4$\\
	$W(6,2)$  			& $-t^{-\frac{7}{2}} + 3t^{-\frac{5}{2}} - 6t^{-\frac{3}{2}} +
							9t^{-\frac{1}{2}} - 11t^{\frac{1}{2}} + 12t^{\frac{3}{2}} -
							11t^{\frac{5}{2}} + 8t^{\frac{7}{2}} - 6t^{\frac{9}{2}} + 2t^{\frac{11}{2}} - t^{\frac{13}{2}}$\\
	$W(7,2)$  			& $t^{-6} - 3t^{-5} + 8t^{-4} - 14t^{-3} + 20t^{-2} - 25t^{-1}+	
						27 - 25t + 20t^2 - 14t^3 + 8t^4 - 3t^5 + t^6$\\
	$W(8,2)$			& $-t^{-\frac{11}{2}} + 4t^{-\frac{9}{2}} - 11t^{-\frac{7}{2}} + 
						22t^{-\frac{5}{2}} - 35t^{-\frac{3}{2}} + 48t^{-\frac{1}{2}} - 58t^{\frac{1}{2}} + 61t^{\frac{3}{2}} - 56t^{\frac{5}{2}} + 46t^{\frac{7}{2}} - 33t^{\frac{9}{2}} + 19t^{\frac{11}{2}} - 10t^{\frac{13}{2}} + 3t^{\frac{15}{2}} - t^{\frac{17}{2}}$\\
	$W(9,2)$			& $t^{-8} - 4t^{-7} + 13t^{-6} - 29t^{-5} + 53t^{-4} - 82t^{-3} +
						110t^{-2} - 131t^{-1} + 139 - 131t + 110t^2 - 82t^3 + 53t^4 - 29t^5 + 13t^6 - 4t^7 + t^8$\\
	\bottomrule
\end{longtable}

\bibliographystyle{plain}
\bibliography{refs}

\end{document}